\newtheorem{theorem}{Theorem}[section]
\newtheorem{lemma}[theorem]{Lemma}
\newtheorem{proposition}[theorem]{Proposition}
\theoremstyle{definition}
\newtheorem{definition}[theorem]{Definition}
\newtheorem{remark}[theorem]{Remark}
\newcommand{\R}{\mathbb{R}}
\newcommand{\N}{\mathbb{N}}
\newcommand{\Z}{\mathbb{Z}}
\newcommand{\T}{\mathbb{T}}
\newcommand{\hrefemail}[1]{\href{mailto:#1}{#1}}
\newcommand{\ac}[2]{\accentset{#1}{#2}}
\newlist{properties}{enumerate}{10}
\setlist[properties]{label*=(\roman*)}
\crefname{propertiesi}{property}{properties}
\Crefname{propertiesi}{Property}{Properties}
\title{On small-amplitude asymmetric water waves}
\author[D. S. Seth]{Douglas Svensson Seth}
\dedicatory{Department of Mathematical Sciences\\ Norwegian University of Science and Technology\\ 7491 Trondheim, Norway} 
\thanks{E-mail address: \hrefemail{douglas.s.seth@ntnu.no}\\
        \indent{}Orcid: \href{https://orcid.org/0000-0001-7339-5507}{0000-0001-7339-5507}}
\subjclass[2020]{76B03, 76B45, 35Q35, 37K50, 35A01}
\keywords{Asymmetric waves, Non-symmetric  waves, Capillary-gravity Water waves, Local bifurcation, Lyapunov–Schmidt reduction}
\begin{document}
\begin{abstract}
    We generalize the method used by Mæhlen \& Seth \cite{Maehlen_2023} used to prove the existence of small-amplitude asymmetric solutions to the capillary-gravity Whiham equation, so that it can be applied directly to a class of similar equations. The purpose is to prove or disprove the existence of asymmetric waves for the water wave problem or other model equations for water waves. Our main result in this paper is a theorem that gives both necessary and sufficient conditions for the existence of small-amplitude periodic asymmetry solutions for this class of equations. The result is then applied to an infinite depth capillary-gravity Whitham equation and an infinite depth capillary-gravity Babenko equation to show the nonexistence of small-amplitude waves for these equations. This example also highlights the similarities between these equations suggesting the potential existence of small-amplitude asymmetric waves for the finite depth capillary-gravity Babenko equation.
\end{abstract}
\maketitle

\section{Introduction}
\subsection*{Background} 
Most of the theory on traveling water waves is developed under the assumption of symmetry around a vertical axis; see \cite{Haziot_2022} for an overview. This symmetry combined with the translational invariance of the problem can be turned into an assumption of even waves. The theory of symmetric traveling waves is rich but also obviously incomplete. It says nothing about what happens if we drop the symmetry assumption and consider asymmetric (or non-symmetric) waves as well; see \Cref{def:asymmetric} below. That is, to complete the picture it is required to either show the existence of all such asymmetric waves or prove that they cannot exist in various regimes.

A common approach to showing the existence of water waves is through bifurcation. It is well established that symmetry breaking can occur spontaneously through bifurcation, although the equations themselves retain the symmetry in question. Examples of this include Hopf bifurcation breaking temporal symmetry or Bénard convection breaking spatial symmetry \cite{Sattinger_1980}. The water wave problem and many of the derived models (see, e.g. \cite{Lannes2013}) have the aforementioned symmetry, and so do the equations we are dealing with in the present paper. However, we show that this symmetry can be broken through bifurcation under certain conditions.

For pure gravity traveling waves, the question of the existence of asymmetric waves was answered in the case of solitary waves by Craig \& Sternberg \cite{Craig_1988}, who proved that all such waves are necessarily symmetric. For the periodic case, the investigation starts with Chen \& Saffman \cite{Chen_1980}, who found new families of bifurcating solutions on deep water when attempting to compute asymmetric traveling gravity waves. However, these turned out to be even solutions that were shifted, i.e. symmetric. The first result to show any form of the existence of asymmetric periodic gravity waves instead comes from Zufiria \cite{Zufiria_1987WeaklyGravFinD}, who derived a weakly nonlinear Hamiltonian model and showed the existence of asymmetric solutions through symmetry-breaking bifurcation. Zufiria also numerically computed asymmetric water waves using the full Euler equations on infinite depth \cite{Zufiria_1987GravityInfD}.

Also in the case of capillary-gravity traveling waves, one of the first results was obtained by Zufiria \cite{Zufiria_1987GravCapFinD} showing the existence of asymmetric solutions to a weakly nonlinear Hamiltonian model. This result has later been amended by additional numerical studies by Shimizu \& Shoji \cite{Shimizu_2012} and Gao, Wang \& Vanden-Broeck \cite{Gao_2016Investigations,Gao_2016Solitary}.

All these results share the common factor that the symmetry-breaking bifurcation is a secondary bifurcation, i.e. the asymmetric waves bifurcate from a family of nonzero symmetric waves. In contrast to this, Mæhlen \& Seth \cite{Maehlen_2023} showed the existence of a symmetry-breaking bifurcation at the trivial solution to the capillary-gravity Whitham equation. A shallow water wave model proposed by Whitham \cite{Whitham1967} for gravity waves that was later extended to the capillary-gravity case \cite{Lannes_2013}. This can be seen as the result that completes the picture of an earlier result by Ehrnström, Johnson, Maehlen \& Remonato \cite{Ehrnstrom_2019} characterizing all small amplitude symmetric traveling wave solutions. Another avenue that requires examination is the existence of asymmetric waves in the presence of vorticity; see, e.g. Constantin \& Escher \cite{Constantin2004} and Ehrnström, Holden \& Raynaud \cite{Ehrnstrom_2009}.

The nonexistence for asymmetric periodic capillary-gravity waves bifurcating from the trivial solution is proven in \cite[Theorem 4.5]{Okamoto_2001}. However, the model used there seems to reduce the number of free parameters more than necessary in the finite depth case. In the present paper, we will generalize the method used in \cite{Maehlen_2023} so that it can be applied directly to a class of similar equations. This allows us to study the full water wave problem, other model equations for water waves, as well as unrelated equations of similar form with this method, which will help us find or disprove asymmetric waves. We present this as a theorem (\Cref{thm:necessaryandsufficient}) stating necessary and sufficient conditions for the existence of small-amplitude periodic asymmetric solutions for this class of equations. The result is then applied to an infinite depth capillary-gravity Whitham equation to show the nonexistence of small-amplitude asymmetric waves for this equation. The same is done for the capillary-gravity water-wave problem in Babenko's formulation\cite{Buffoni_2000}. This reveals the similarities the equations exhibit, which suggests that one could show the existence of small-amplitude asymmetric waves for the water wave problem with finite depth. 

\subsection*{Method}Since the capillary-gravity Whitham equation served as a blueprint for the class of equations we consider in this paper, we introduce the equation here.
\begin{equation}\label{eq:Whitham}
u_t +(M_{T}u+u^2)_x=0,
\end{equation}
where $M_{T}$ is a spatial Fourier multiplier
\[
\widehat{M_{T}u}(t,\xi)=m_T(\xi)\widehat{u}(\xi,t),
\]
where we use the Fourier transform $\widehat{u}(t, \xi) = \int_{\R} u(t, x)e^{-i\xi x}dx$ and the symbol $m_{T}$ is given by
\[
m_T(\xi)=\sqrt{\frac{(1+T\xi^2)\tanh(\xi)}{\xi}}.
\]
The asymmetric solutions to \cref{eq:Whitham} in \cite{Maehlen_2023} were constructed under the assumption of traveling waves, that is, the assumption that $u(x,t)$ has a fixed shape traveling with speed $c$, or $u=u(x-ct)$. The solution was also assumed to be periodic with period $\frac{2\pi}{\kappa}$, but rescaled to $2\pi$-periodic by changing variables. The equation resulting from these assumptions can be integrated, giving a
\begin{equation}\label{eq:WhitamRed}
-cu+M_{T,\kappa}u+u^2=0,
\end{equation}
where the integration constant is (without loss of generality) set to $0$, and $M_{T,\kappa}$ is a Fourier multiplier with symbol $m_{T,\kappa}(\xi)=m_T(\kappa\xi)$. The paper by Mæhlen \& Seth \cite{Maehlen_2023} focuses almost exclusively on \cref{eq:WhitamRed}. The purpose of this paper is to generalize the result to a class of similar equations. 

In other words:
The main purpose of this paper is to identify necessary and sufficient conditions to apply the method of finding asymmetric waves used in \cite{Maehlen_2023} for a class of equations of a similar form to the capillary-gravity Whitham equation.

The class of equations can be thought of as what we obtain if we let $M_{T,\kappa}$ in \cref{eq:Whitham} be a general Fourier multiplier and replace the $u^2$-term by a general nonlinearity, however, both are still subject to some conditions that we specify below.

We approach the equations with a method that is similar to the classical Crandall \& Rabinowitz bifurcation theorem \cite{Crandall_1971}. We begin by preforming a Lyapunov--Schmidt reduction, so we can write the solution $u=v+w$, where $w$ lies in an infinite-dimensional function space and $v$ in a finite-dimensional function space (the kernel of the linearization). Moreover, the equation itself is decomposed into a finite-dimensional and infinite-dimensional part. The infinite-dimensional part is directly solved with the implicit function theorem giving $w=w(v)$. 

For the finite-dimensional part of the problem, the method really diverges from the classical result. It can be shown that we need $v$ to be asymmetric for the solution to be asymmetric. Since we consider periodic functions, $v$ lie in a space spanned by a finite number of Fourier modes. Due to the symmetry of the equation, the modes always come in sine-cosine pairs with the same wavenumber. The evenness assumption that is common for water waves reduces this to solely cosine functions, but in this paper we avoid making this assumption. Moreover, one such pair is not sufficient for $v$ to be asymmetric because any sum of such a sine-cosine pair can always be shifted to only a cosine through a spatial translation. Thus, we require at least two such pairs, or in other words, that the kernel of the linearization of the equation be four-dimensional. This higher dimensional kernel makes the necessary computations more involved in itself, but the main technical issue is that the elements of the kernel are resonant. Even if $v$ does not contain all the Fourier modes in the kernel, the solution can still contain all the modes, since the existing modes may combine to create the missing ones. To handle these resonances, we need to expand $w$ to the order in which these modes appear, which in general can be arbitrarily high. Fortunately, the structure of the equations allow us to reduce the finite-dimensional problem from four to three dimensions, which means that we only have to handle one such resonance instead of two.

\subsection*{Structure of the paper}The class of equations with which we work in this paper is defined in \Cref{sec:prel}. Additionally, there we collect some of the more elementary results we need for the main result. The main result is contained and proved in \Cref{sec:main}: \Cref{thm:necessaryandsufficient} gives necessary and sufficient conditions for the existence of asymmetric solutions. In the last part, \Cref{sec:example}, we apply the result to the infinite depth Whitham equation to show the nonexistence of small-amplitude asymmetric waves. We also apply the result to the infite depth Babenko equation showing the same type of result, as well as demonstrating the similarities between the two equations. This gives hope for finding asymmetric solutions to the Babenko equation on finite depth and thus asymmetric small-amplitude water waves.
\section{Preliminaries}\label{sec:prel}
\subsection*{Asymmetry and function spaces}We begin by giving a stricter definition of symmetric and asymmetric functions than the one given in the introduction. The reason for this definition of symmetry and asymmetry is the translational invariance of the equation. Any vertical line of symmetry is equivalent to evenness; conversely, for a function to be asymmetric, there cannot be any vertical line of symmetry.
\begin{definition}[Symmetric and asymmetric functions]\label{def:asymmetric} 
    We say that a function $u:\mathbb{R}\to \R$ is symmetric if there exists an $a\in\R$ such that $x\mapsto u(x+a)$ is an even function, and if no such $a$ exists then we say that $u$ is asymmetric.
\end{definition}

With this definition in hand, we turn our attention to function spaces. The theory works for a quite general choice of function spaces, but one can imagine some of the common spaces used in the study of differential equations such as Hölder spaces, Sobolev spaces, etc. Naturally, we want our function spaces to be Banach spaces. The other two important properties are that they consist of functions on the unit circle $\T=\R/2\pi\Z$ (or equivalently that they consist of $2\pi$-periodic functions on $\R$) and that we can express the elements as Fourier series. Thus, we make the following definition.
\begin{definition}[Function spaces] Let $X^s(\T)$ denote a scale of Banach spaces that consists of functions defined on the unit circle $\T$ such that $X^s(\T)\hookrightarrow L^2(\T)$ for all $s\in S\subset \R$.
\end{definition}
 We only concern ourselves with spaces with functions defined on the unit circle because functions of any other period can always be rescaled to $2\pi$-periodic functions. In fact, this is a crucial step in \cite{Maehlen_2023}, since the period is needed as a bifurcation parameter. Moreover, since all functions we consider can be written as Fourier series, we often define Fourier multipliers through the basis $\{e^{ikx}\}_{k\in \mathbb{Z}}$. For these functions, defining a multiplier $M$ that acts through $\widehat{M u}(\xi)=m(\xi)\hat{u}(\xi)$ is equivalent to defining its action through $M e^{ikx}=m(k)e^{ikx}$ (for all $k$). 

 In the applications in \Cref{sec:example}, we use Zygmund spaces as function spaces for the equation. These are defined as follows. Let $\varphi$ be the Fourier inverse of a real-valued symmetric function $\hat{\varphi}\in C_c^\infty(\R)$ that satisfies $\hat{\varphi}(\xi)=1$ for $|\xi|\leq 1$ and $\hat{\varphi}(\xi)=0$ for $|\xi|\geq 2$. Then set $\phi_0\coloneqq \varphi$ and $\phi_j(x)\coloneqq 2^j\varphi(2^jx) - 2^{j-1}\varphi(2^{j-1}x)$ for $j\in \N$. The Zygmund space $\mathcal{C}^{s}(\T)$ consists of real-valued distributions $u$ on $\T$ such that
\begin{align*}
    \|u\|_{\mathcal{C}^{s}(\T)}\coloneqq \sup_{j} 2^{js}\|u\ast \phi_j\|_{L^\infty(\T)}<\infty.
\end{align*}
For all $s'>s\geq 0$, the space $\mathcal{C}^{s'}(\T)$ is compactly embedded in $\mathcal{C}^s(\T)$. 
Moreover, the Hölder space $C^s(\T)$ naturally embeds in $\mathcal{C}^s(\T)$ for all $s\geq 0$, while the converse is true for non-integer $s$; see, e.g.\cite[Section 13.8]{Taylor_1996} for a comprehensive treatment.
 
 Finally, we denote a ball of radius $r$ centered at the point $p$ in the space (or subset of space) $X$ by $B_r^X(p)$. A claim containing a ball with radius $\varepsilon$ means that there exists a sufficiently small radius for the claim to be satisfied. 

\subsection*{The class of equations}The equations we consider are of the form
\begin{equation}\label{eq:fundamentalequation}
    F(\mu,u)=L(\mu)u+N(\mu,u)=0,
\end{equation}
where $\mu=(\mu_1,\ldots,\mu_n)\in M\subset\R^n$ is a set of parameters and $u$ is a function in $X^s(\T)$, $s,t\in S$. We also assume that the equation satisfies the following properties.
\begin{properties}[topsep=14pt, itemsep=14pt]
    \item\label{property1} The linear part of the equation 
    \[
        L(\mu):X^s(\T)\to X^t(\T)
    \]
    is a Fredholm operator of index 0 for all $\mu\in M$.
    Moreover, the linear part of the equation is a Fourier multiplier, so the action on a single Fourier mode is 
    \[
        L(\mu)e^{ikx}=l_\mu(k)e^{ikx},
    \]
    and the symbol is even, $l_\mu(-k)=l_\mu(k)$.
    \item\label{property2} The nonlinear part of the equation can be written
    \[
        N(\mu,u)=\sum_{m=2}^\infty N_m(\mu,(u,\ldots,u)),
    \]
    where $N_m(\mu,\cdot):(X^s(\T))^m\to X^t(\T)$, are $m$-linear operators that acts through 
\[
        N_m(\mu,(e^{ik_1x},\ldots,e^{ik_mx}))=n_{m,\mu}(k_1,\ldots,k_m)e^{i(k_1+\ldots+k_m)x}.
\]
Moreover, $n_{m,\mu}(k_1,\ldots,k_m)=n_{m,\mu}(-k_1,\ldots,-k_m)$, which means that $N$ preserves evenness. That is, if $u(-x)=u(x)$, then $N(\mu,u(-x))=N(\mu,u(x))$.
    \item\label{property3}The equation is variational. That is, there exists a functional $\mathcal{J}_\mu(u)$ such that 
    \[
        D_u\mathcal{J}_\mu(u)w=\int_\T (L(\mu)u+N(\mu,u))w\,dx=\langle L(\mu)u+N(\mu,u),w\rangle.
    \]
    \item\label{property4} The map $F:M\times X^s(\T)\to X^t(\T)$ is analytic in the sense defined in \cite{Buffoni_2004}.
\end{properties}

\subsection*{Lyapunov--Schmidt reduction}\Cref{property1} implies that for any $\mu\in M$ there exists a corresponding finite set $K_\mu$ of integers such that $l_\mu(k)=0$ for all $k\in K_\mu$. We begin by considering $\mu_0$ so that $K_{\mu_0}=\emptyset$. Then $L(\mu_0)$ is a Fredholm operator with $\ker L(\mu_0)=\{0\}$, which means that it is an invertible operator. Thus, $F(\mu_0,0)=0$ and $D_uF(\mu_0,0)=L(\mu_0)$ are invertible. Hence we can apply the implicit function theorem which gives us the existence of a unique solution in $B^{X^{s}(\T)}_{\varepsilon}(0)$ to $F(\mu,u)=0$ for every $\mu\in B^M_\varepsilon(\mu_0)$. However, we clearly have $F(\mu,0)=0$ for all $\mu\in B^M_\varepsilon(\mu_0)$, so the only solution is the trivial one, which is symmetric. Thus, we have the following result.
\begin{lemma}\label{lemma:TrivialKernel}
If $K_{\mu_0}=\{0\}$, then there exists no solution $u\in X^s(\T)$ to \cref{eq:fundamentalequation} for any $\mu\in B^M_\varepsilon(\mu_0)$ except $u\equiv 0$.
\end{lemma}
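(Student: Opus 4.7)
The plan is to apply the analytic implicit function theorem at the point $(\mu,u)=(\mu_0,0)$. The hypothesis on $K_{\mu_0}$ (read together with the preceding paragraph, where $L(\mu_0)$ is described as having trivial kernel) ensures, via \Cref{property1}, that $L(\mu_0)\colon X^s(\T)\to X^t(\T)$ is a Fredholm operator of index $0$ with zero kernel. A Fredholm operator of index $0$ with trivial kernel automatically has trivial cokernel, so $L(\mu_0)$ is a bounded linear isomorphism.

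Next I would check the remaining hypotheses of the implicit function theorem. From the multilinear expansion of $N$ in \Cref{property2}, which begins at order $m=2$, it follows that $N(\mu,0)=0$ and $D_u N(\mu_0,0)=0$. Combined with the linearity of $L$, this yields $F(\mu_0,0)=0$ and $D_u F(\mu_0,0)=L(\mu_0)$, which is an isomorphism. Together with the analyticity of $F$ granted by \Cref{property4}, this lets me invoke the Banach-space analytic implicit function theorem from \cite{Buffoni_2004} to obtain a neighborhood $B^{M}_{\varepsilon}(\mu_0)\times B^{X^s(\T)}_{\varepsilon}(0)$ on which every solution of $F(\mu,u)=0$ is of the form $u=u(\mu)$ for a unique analytic map $\mu\mapsto u(\mu)$. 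Since $F(\mu,0)=L(\mu)\cdot 0+N(\mu,0)=0$ identically in $\mu$, the function $u\equiv 0$ is a solution for every $\mu\in B^{M}_{\varepsilon}(\mu_0)$, and by the uniqueness statement it must coincide with $u(\mu)$ throughout the neighborhood; this is the desired conclusion.

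There is no real technical obstacle here: the lemma is a direct consequence of the implicit function theorem combined with the vanishing of $N$ and its first derivative at $u=0$. The only small point to be careful about is invoking the analytic rather than merely $C^1$ version of the theorem, so that the conclusion is compatible with \Cref{property4} and with the Zygmund-space setting to be used in \Cref{sec:example}; the plain $C^1$ version would already be enough to establish nonexistence of nontrivial solutions, but not the analyticity that is convenient later.
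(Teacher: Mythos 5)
Your proposal is correct and follows essentially the same route as the paper: the paper likewise observes that $K_{\mu_0}=\emptyset$ makes $L(\mu_0)$ an invertible Fredholm operator, applies the (analytic) implicit function theorem at $(\mu_0,0)$, and concludes from uniqueness that the trivial branch $u\equiv 0$ is the only small solution. Your additional remarks on $N(\mu,0)=0$ and $D_uN(\mu_0,0)=0$ just make explicit what the paper leaves implicit.
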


The above result is obviously well known, but means that we need $\mu_0$ such that $ K_{\mu_0}$ is nonempty to find arbitrarily small asymmetric solutions. Thus, we assume that there is a point $\mu_0\in M$ such that $  K_{\mu_0}$ is nonempty. This implies
\[
\ker L(\mu_0)=V,
\]
where $\dim V=\vert K_{\mu_0}\vert>0$. In fact, $\vert K_{\mu_0}\vert$ is an even number because $l_\mu$ is even, so $\vert K_{\mu_0} \vert\geq 2$. In this case, we can perform a Lyapunov--Schmidt reduction. Since $L$ is invariant on subspaces spanned by a single Fourier mode, we can decompose $X^s(\T)=V\oplus W^s$ and $X^t(\T)=V\oplus W^t$, where $W^t$ is the image of $L(\mu_0)$. We also define the corresponding $L^2$-orthogonal projection $P_V:L^2(\T)\to V$ and $P_W=I-P_V$. These allow us to write $u=P_Vu+P_Wu=v+w$ and \cref{eq:fundamentalequation} as
\begin{align}
    L(\mu)v+P_VN(\mu,v+w)&=0,\label{eq:findim}\\
    L(\mu)w+P_WN(\mu,v+w)&=0.\label{eq:infdim}
\end{align}
\begin{lemma}\label{lemma:LS-reduction}
	For any $v\in B^{V}_\varepsilon(0)$ and $\mu\in B^{M}_\varepsilon(\mu_0)$ there exists a unique function $w\in W^s$ solving \cref{eq:infdim}. Moreover, the mapping $(\mu,v)\mapsto w$ is analytic.
\end{lemma}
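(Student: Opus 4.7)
The plan is to apply the analytic implicit function theorem to the map
\[
G: B^M_\varepsilon(\mu_0) \times V \times W^s \to W^t, \quad G(\mu,v,w) = L(\mu)w + P_W N(\mu,v+w).
\]
First I would check that $G$ is well-defined, which reduces to verifying that $L(\mu)$ maps $W^s$ into $W^t$ for every $\mu$ near $\mu_0$. This follows from \Cref{property1}: since $L(\mu)$ is a Fourier multiplier it preserves each one-dimensional span $\operatorname{span}\{e^{ikx}\}$, and both $V$ and $W^s$ are Fourier-adapted subspaces ($V$ being spanned by the modes $k\in K_{\mu_0}$), so the complementary mode subspaces on the source and target sides match up. Analyticity of $G$ is then inherited from \Cref{property4} together with the boundedness of the projection $P_W$.

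Next I would identify the base point and the linearization. By \Cref{property2} every $N_m$ is $m$-linear with $m\geq 2$, so $N(\mu_0,0)=0$ and $D_u N(\mu_0,0)=0$; consequently $G(\mu_0,0,0)=0$ and
\[
D_w G(\mu_0,0,0)\,\dot w = L(\mu_0)\dot w.
\]
The crux is to verify that $L(\mu_0)|_{W^s}\colon W^s\to W^t$ is a Banach-space isomorphism. Its kernel is $W^s\cap V=\{0\}$ by construction, while its image coincides with $L(\mu_0)X^s=W^t$ since $L(\mu_0)$ annihilates $V$. Boundedness of the inverse then follows from the open mapping theorem, or equivalently from the Fredholm index-$0$ structure supplied by \Cref{property1}.

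With these ingredients assembled, invoking the analytic implicit function theorem in the form found in \cite{Buffoni_2004} would yield a unique analytic map $w=w(\mu,v)$ defined on $B^M_\varepsilon(\mu_0)\times B^V_\varepsilon(0)$ with values in $W^s$ solving $G(\mu,v,w(\mu,v))=0$, which is exactly the claim. I do not expect any deep obstacle here: the only delicate point is the bookkeeping that aligns the two decompositions $X^s=V\oplus W^s$ and $X^t=V\oplus W^t$ with the Fourier-multiplier structure of $L$, so that $G$ and its linearization at the base point really do land in $W^t$.
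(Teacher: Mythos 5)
Your proposal is correct and follows essentially the same route as the paper: both apply the analytic implicit function theorem to the map $(w,v,\mu)\mapsto L(\mu)w+P_WN(\mu,v+w)$ after observing that $D_wF_W(0,0,\mu_0)=L(\mu_0)$ is an isomorphism from $W^s$ to $W^t$. The extra bookkeeping you supply (the Fourier-multiplier structure aligning the decompositions, and the kernel/image argument for the isomorphism) is a fuller justification of steps the paper states without proof, not a different method.
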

\begin{proof}
We apply the implicit function theorem to the operator $F_W:W^s\times V\times M\to W^t$ defined by $(w,v,\mu)\mapsto L(\mu)w+P_WN(\mu,v+w)$. Clearly, $D_wF_W(0,0,\mu_0)=L(\mu_0)$, which is an isomorphism from $W^s$ to $W^t$. The result follows from the analytic implicit function theorem \cite{Buffoni_2016}.
\end{proof}
Now to find a nontrivial solution to \cref{eq:fundamentalequation}, it only remains to solve \cref{eq:findim} for some nontrivial $v$. However, for this to be an asymmetric solution of \cref{eq:fundamentalequation}, it is necessary to solve \cref{eq:findim} for some asymmetric $v$, as shown in the result below.

\begin{lemma}\label{lemma:asymmetryequivalence}Let $v\in B^{V}_\varepsilon(0)$ and $\mu\in B^{M}_\varepsilon(\mu_0)$, and define $u=v+w$, where $w\in W^s$ is the solution to \cref{eq:infdim} from \Cref{lemma:LS-reduction}. Then $u$ is asymmetric if and only if $v$ is asymmetric.
\end{lemma}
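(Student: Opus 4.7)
The plan is to exploit two symmetries of the reduced problem, translation invariance and reflection invariance, together with the uniqueness from \cref{lemma:LS-reduction} and the uniqueness of the splitting $X^s(\T) = V \oplus W^s$. The relevant preliminary fact is that both $V$ and $W^s$ are invariant under translation $T_a u := u(\cdot + a)$ and reflection $R u := u(-\cdot)$: indeed $V$ is spanned by the Fourier modes $\{e^{ikx} : k \in K_{\mu_0}\}$, an index set that is symmetric under $k \mapsto -k$ because $l_{\mu_0}$ is even (\cref{property1}), and $W^s$ inherits both invariances via the $L^2$-orthogonal projection. Moreover $L(\mu)$ commutes with $T_a$ and $R$ (even symbol, \cref{property1}), and $N(\mu,\cdot)$ does as well, since each $N_m$ has a symbol invariant under $(k_1,\ldots,k_m)\mapsto(-k_1,\ldots,-k_m)$ (\cref{property2}).

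For the "if" direction, I would assume $v$ is symmetric and fix $a$ with $v_a := T_a v$ even. Setting $w_a := T_a w$, the commutations above ensure $w_a \in W^s$ and that $w_a$ solves \cref{eq:infdim} with $v$ replaced by $v_a$. Then I would apply the reflection to this equation: $R w_a \in W^s$ by invariance, and because $R v_a = v_a$ while $L$, $N$, and $P_W$ all commute with $R$, the function $R w_a$ also solves the same infinite-dimensional equation. The uniqueness in \cref{lemma:LS-reduction} then forces $R w_a = w_a$, so $w_a$ is even, and therefore $T_a u = v_a + w_a$ is even, i.e., $u$ is symmetric.

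For the converse, I would assume $T_a u$ is even for some $a$ and write $T_a u = v_a + w_a$ with $v_a\in V$, $w_a\in W^s$. Applying $R$ yields $T_a u = R v_a + R w_a$ as well, with $R v_a \in V$ and $R w_a \in W^s$ by the reflection invariance of the subspaces. Uniqueness of the direct-sum decomposition then immediately gives $v_a = R v_a$, so $v_a$ is even and $v$ is symmetric. The argument is essentially formal once the four commutations are in place, and I do not anticipate a substantial obstacle; the only point worth checking carefully is the reflection invariance of $V$ and $W^s$, which hinges precisely on the evenness hypotheses in \cref{property1,property2}.
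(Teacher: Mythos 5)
Your proof is correct and follows essentially the same strategy as the paper's: both directions reduce to the equivariance of the problem under translation and reflection together with the uniqueness statements (of the direct-sum decomposition for one direction, and of the solution $w$ from \cref{lemma:LS-reduction} for the other). The only cosmetic difference is in the direction ``$v$ symmetric $\Rightarrow$ $u$ symmetric'': the paper restricts the reduced equation to the subspace of even functions and re-applies the implicit function theorem there, whereas you reflect the unique $w$ and invoke uniqueness directly -- an equivalent argument, and you are in fact slightly more careful than the paper about first translating by $a$ before reflecting.
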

\begin{proof}We begin by assuming that $u$ is symmetric. Because we have translational invariance for the equation this is equivalent to assuming that that $u$ is even. Then
$v(-x)+w(-x)=u(-x)=u(x)=v(x)+w(x)$ or $v(-x)-v(x)=w(-x)-w(x)$, which means $v(-x)=v(x)$ and $w(-x)=w(x)$ because $V$ and $W^s$ are orthogonal.

On the other hand, since $N$ maps even functions to even functions, we can restrict the problem to the subspace of $X^s(\T)$ consisting solely of even functions. Then we can find an even $w$ such that \cref{eq:infdim} is solved for any even $v$. This means that if $v$ is symmetric, so is $u$. 

Thus, we have proven $u$ is symmetric if and only if $v$ is symmetric, which is equivalent to the statement of the lemma.
\end{proof}
This result has a direct consequence for the case where $K_{\mu_0}=\{\pm k\}$. Then we find that the kernel is
\[
V=\text{span}\{\cos(kx),\sin(kx)\}.
\]
However, for any $(t_1,t_2)$ there exist $\rho$ and $\phi$ such that
\[
v=t_1\cos(kx)+t_2\sin(kx)=\rho\cos(k(x+\phi)).
\]
In other words, if $K_{\mu_0}=\{\pm k\}$ then any $v\in V$ is symmetric. This gives us the following result.
\begin{lemma}\label{lemma:KernelSize2}
	If $K_{\mu_0}=\{\pm k\}$, then there exists no arbitrarily small asymmetric solution $u\in X^s(\T)$ to \cref{eq:fundamentalequation} for any $\mu\in B^M_\varepsilon(\mu_0)$.
\end{lemma}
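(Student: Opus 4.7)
The plan is to combine the Lyapunov--Schmidt parametrization with the trigonometric observation already sketched in the paragraph preceding the statement, reducing the lemma to an immediate consequence of \Cref{lemma:LS-reduction,lemma:asymmetryequivalence}.

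First, I would fix $\varepsilon>0$ smaller than the radius produced by \Cref{lemma:LS-reduction} and consider any small solution $u\in B^{X^s(\T)}_\varepsilon(0)$ of \cref{eq:fundamentalequation} corresponding to some $\mu\in B^M_\varepsilon(\mu_0)$. Splitting $u=P_Vu+P_Wu=:v+w$ and projecting \cref{eq:fundamentalequation} by $P_W$ shows that $w$ satisfies \cref{eq:infdim}. By the uniqueness clause in \Cref{lemma:LS-reduction}, $w$ coincides with the analytic solution $w(\mu,v)$ produced there, so every sufficiently small solution is captured by this parametrization.

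Second, I would show that, under the hypothesis $K_{\mu_0}=\{\pm k\}$, every $v\in V=\operatorname{span}\{\cos(kx),\sin(kx)\}$ is symmetric in the sense of \Cref{def:asymmetric}. Given $v=t_1\cos(kx)+t_2\sin(kx)$ with $(t_1,t_2)\in\R^2$, set $\rho=\sqrt{t_1^2+t_2^2}$ and choose $\phi\in\R$ with $\cos(k\phi)=t_1/\rho$ and $\sin(k\phi)=-t_2/\rho$ (taking $\phi=0$ if $\rho=0$). The angle addition formula then gives
\[
v(x)=\rho\cos(k(x+\phi)),
\]
which is a translate of the even function $x\mapsto\rho\cos(kx)$, so $v$ is symmetric by \Cref{def:asymmetric}.

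Finally, I would invoke \Cref{lemma:asymmetryequivalence}: since $v$ is symmetric, so is $u=v+w(\mu,v)$. Hence no $u\in B^{X^s(\T)}_\varepsilon(0)$ solving \cref{eq:fundamentalequation} for $\mu\in B^M_\varepsilon(\mu_0)$ can be asymmetric, which is exactly the statement of the lemma. There is no real obstacle here; the only point worth verifying carefully is that the Lyapunov--Schmidt reduction genuinely exhausts all small solutions, which is handled by the uniqueness in \Cref{lemma:LS-reduction}. Everything else is the trigonometric identity above and a direct appeal to \Cref{lemma:asymmetryequivalence}.
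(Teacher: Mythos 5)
Your proposal is correct and follows essentially the same route as the paper: reduce to the Lyapunov--Schmidt parametrization via the uniqueness clause of \Cref{lemma:LS-reduction}, observe that every element of $V=\operatorname{span}\{\cos(kx),\sin(kx)\}$ is a shifted cosine and hence symmetric, and conclude via \Cref{lemma:asymmetryequivalence}. The only difference is cosmetic: you spell out the choice of $\rho$ and $\phi$ explicitly, whereas the paper records that trigonometric observation in the paragraph preceding the lemma and simply cites it in the proof.
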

\begin{proof} Assume we have a nontrivial solution $u$. If $u$ is sufficiently small, then $P_Vu=v\in B^{V}_\varepsilon(0)$, so there exists a unique solution $w$ to \cref{eq:infdim} according to \cref{lemma:LS-reduction}. However, by the uniqueness we get $P_Wu=w$. Since $K_{\mu_0}=\{\pm k\}$ means that all elements in $V$ are symmetric, we can apply \cref{lemma:asymmetryequivalence} to conclude that $u$ is symmetric.
\end{proof}
In general, we expect \cref{eq:findim} to be a number of scalar equations equal to $\vert K_{\mu_0}\vert$. However, the above result shows that when $\vert K_{\mu_0}\vert=2$ we cannot obtain anything but symmetric solutions, which due to the translational invariance are equivalent to even solutions. Thus, all essentially different solutions can be obtained even if we a priori restrict the problem to even solutions, but then the dimension of $V$ is reduced to one. In particular, this means that \cref{eq:findim} is essentially one scalar equation when $\vert K_{\mu_0}\vert=2$. A similar reduction of the number of equations in \cref{eq:findim} is possible for any $\vert K_{\mu_0}\vert$. Using the variational structure and translational invariance, we can show that \cref{eq:findim} is essentially $\vert K_{\mu_0}\vert-1$ scalar equations.
\begin{lemma}Let $v\in B^{V}_\varepsilon(0)$ and $\mu\in B^{M}_\varepsilon(\mu_0)$ and define $u=v+w$, where $w\in W^s$ is the solution to \cref{eq:infdim} from \Cref{lemma:LS-reduction}. Then
\[
\langle L(\mu)v+P_V N(\mu,v+w),v'\rangle=0.
\]
\end{lemma}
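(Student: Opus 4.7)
The plan is to use the variational structure (\cref{property3}) together with translational invariance to produce a scalar identity satisfied automatically by any solution of the infinite-dimensional equation, with $v'$ denoting the spatial derivative $\partial_x v$.

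First I would exploit the fact that the equation is translation-invariant and variational: since $\mathcal{J}_\mu$ is defined by an integral over $\T$ of an expression that only involves $u$ through Fourier multipliers and pointwise multilinear combinations, the functional satisfies $\mathcal{J}_\mu(u(\cdot+a))=\mathcal{J}_\mu(u)$ for every $a\in\R$. Differentiating in $a$ at $a=0$ gives
\[
0=\frac{d}{da}\bigg|_{a=0}\mathcal{J}_\mu(u(\cdot+a))=D_u\mathcal{J}_\mu(u)u'=\langle L(\mu)u+N(\mu,u),u'\rangle,
\]
for $u=v+w$, where $u'=\partial_x u$.

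Next I would use the Lyapunov--Schmidt splitting to simplify the left factor of the inner product. Because $L(\mu)$ is a Fourier multiplier and $V$ is spanned by Fourier modes indexed by $K_{\mu_0}$, both $V$ and $W^{s}$ (resp.\ $W^{t}$) are invariant under $L(\mu)$, so
\[
L(\mu)u+N(\mu,u)=\bigl(L(\mu)v+P_VN(\mu,v+w)\bigr)+\bigl(L(\mu)w+P_WN(\mu,v+w)\bigr).
\]
By \Cref{lemma:LS-reduction}, the second bracket vanishes since $w$ solves \cref{eq:infdim}. Hence $L(\mu)u+N(\mu,u)=L(\mu)v+P_VN(\mu,v+w)\in V$.

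Finally, I would observe that the finite-dimensional space $V=\mathrm{span}\{e^{ikx}:k\in K_{\mu_0}\}$ is invariant under $\partial_x$ (and similarly $W^s$), so $u'=v'+w'$ with $v'\in V$ and $w'\in W^s$. Since $V$ and $W^s$ are $L^2$-orthogonal and $L(\mu)v+P_VN(\mu,v+w)\in V$, the inner product against $w'$ vanishes, leaving
\[
0=\langle L(\mu)u+N(\mu,u),u'\rangle=\langle L(\mu)v+P_VN(\mu,v+w),v'\rangle,
\]
which is the desired identity. There is no real obstacle here; the only subtlety worth making explicit is that $V$ is closed under differentiation, which is what allows the $w'$-contribution to drop out and turns the translational symmetry into one scalar relation among the $|K_{\mu_0}|$ components of \cref{eq:findim}.
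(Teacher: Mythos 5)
Your proof is correct and follows essentially the same route as the paper: differentiate $\mathcal{J}_\mu(u(\cdot+a))$ at $a=0$ using \cref{property3} and translational invariance to get $\langle L(\mu)u+N(\mu,u),u'\rangle=0$, then use the $L^2$-orthogonality of $V$ and $W$ (both invariant under $\partial_x$ and under $L(\mu)$) together with the fact that $w$ solves \cref{eq:infdim} to kill the $W$-contribution. The only cosmetic difference is that you project the residual onto $V$ before pairing with $u'$, whereas the paper first splits the test function $u'=v'+w'$; the ingredients and logic are identical.
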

\begin{proof} By the translational invariance we have
\[
0=\partial_a \mathcal{J}_\mu(u(\cdot + a))\vert_{a=0}=\langle L(\mu)u +N(\mu,v+w),w'\rangle+\langle L(\mu)u+N(\mu,v+w),v'\rangle,
\]
where $v'\in V$ and $w'\in W^s$, so
\[
\langle L(\mu)u +N(\mu,v+w),w'\rangle=\langle L(\mu)w +P_W N(\mu,v+w),w'\rangle
\]
and
\[
\langle L(\mu)u+N(\mu,v+w),v'\rangle=\langle L(\mu)v+P_VN(\mu,v+w),v'\rangle.
\]
Because $w$ is a solution to \cref{eq:infdim}, we get $\langle L(\mu)w +P_W N(\mu,v+w),w'\rangle=0$ that proves the result.
\end{proof}
This lemma shows that one of the equations in \cref{eq:findim} is always satisfied. Thus, naively we need $\mu\in \R^{\vert K_{\mu_0}\vert -1}$ to be able to solve \cref{eq:findim}. We will consider this problem for $\vert K_{\mu_0}\vert=4$ in the following section, the lowest size of $ K_{\mu_0}$ for which we can find asymmetric waves.
\section{Sufficient and necessary conditions}\label{sec:main}
\subsection*{Taylor-Fourier expansions} In this section, we focus mainly on the case where $\mu_0$ is such that $K_{\mu_0}=\{\pm k_1,\pm k_2\}$. Then the kernel of the linearization is given by
\[
    V=\text{span}\{\cos(k_1x),\sin(k_1x),\cos(k_2x),\sin(k_2x)\}.
\]
However, following \cite{Maehlen_2023}, we shall write a general element $v\in V$ as
\[
    v=r_1\cos(k_1(x+\theta_1))+r_2\cos(k_1(x+\theta_2)),
\]
determined by the parameters $r=(r_1,r_2)$ and $\theta=(\theta_1,\theta_2)$. We denote multi-indices by $\alpha,\gamma\in\mathbb{N}_0^2$.
Let
\begin{align*}
E=\left(e^{ik_1(x+\theta_1)},e^{ik_2(x+\theta_2)}\right),\\
k_{\alpha,\gamma}=(\alpha_1-\gamma_1)k_1+(\alpha_2-\gamma_2)k_2,
\end{align*}
and recall the convention that $r^\alpha=r_1^{\alpha_1}r_2^{\alpha_2}$. We also define the operator $\mathcal{L}=L^{-1}(\mu)P_W$, which has symbol
\[
\ell(k)=\left\{
\begin{aligned}
&l^{-1}_{\mu}(k) &&\text{ if }k\notin K_{\mu_0},\\
&0 &&\text{ if }k\in K_{\mu_0}.
\end{aligned}
\right.
\]
With this notation in hand, we can show a Taylor-Fourier expansion of the solution.
\begin{lemma}\label{lemma:expansions}Let $v\in B^{V}_\varepsilon(0)$ and $\mu\in B^{M}_\varepsilon(\mu_0)$ and define $u=v+w$, where $w\in W^s$ is the solution to \cref{eq:infdim} from \Cref{lemma:LS-reduction}. Then there exist coefficients $\hat{u}_{\alpha,\gamma}=\hat{u}_{\alpha,\gamma}(\mu)$ such that
\[
    u=\sum_{\vert \alpha\vert+\vert\gamma\vert\geq 1}\hat{u}_{\alpha,\gamma} r^{\alpha+\gamma}E^{\alpha-\gamma}
\]
and $\hat{n}_{\alpha,\gamma}=\hat{n}_{\alpha,\gamma}(\mu)$ such that
\[
N(\mu,u)=\sum_{\vert \alpha\vert+\vert\gamma\vert\geq 2}\hat{n}_{\alpha,\gamma} r^{\alpha+\gamma}E^{\alpha-\gamma}.
\]
Moreover, $\hat{u}_{\alpha,\gamma}=\hat{u}_{\gamma,\alpha}$ and $\hat{n}_{\alpha,\gamma}=\hat{n}_{\gamma,\alpha}$.
\end{lemma}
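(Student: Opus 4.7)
\emph{Proof proposal.} The plan is to use the analyticity supplied by the Lyapunov--Schmidt reduction to get an unconstrained Taylor expansion of $u$ in complex Fourier coordinates of $v$, and then use the translation invariance and the reflection symmetry of \cref{eq:fundamentalequation} to force the coefficients into the stated form.

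First, parameterize $v\in V$ by complex coordinates $a=(a_1,a_2)$ and $a_-=(a_{-1},a_{-2})$ via
\[
v=a_1e^{ik_1x}+a_{-1}e^{-ik_1x}+a_2e^{ik_2x}+a_{-2}e^{-ik_2x},
\]
where $a_{-j}=\bar a_j$ so that $v$ is real. The chosen normalization $a_j=\tfrac{r_j}{2}e^{ik_j\theta_j}$ gives $a^\alpha a_-^\gamma=2^{-|\alpha|-|\gamma|}\,r^{\alpha+\gamma}e^{i\sum_j(\alpha_j-\gamma_j)k_j\theta_j}$. By \Cref{lemma:LS-reduction} together with \cref{property4}, the map $(\mu,v)\mapsto u=v+w$ is analytic from $M\times V\to X^s(\T)$, and since $V$ is finite dimensional this yields an honest power series
\[
u=\sum_{\alpha,\gamma}U_{\alpha,\gamma}(x,\mu)\,a^\alpha a_-^\gamma,
\]
convergent for $(a,a_-)$ in a neighborhood of $0$, with $U_{\alpha,\gamma}(\cdot,\mu)\in X^s(\T)$.

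Second, I would pin down the $x$-dependence of $U_{\alpha,\gamma}$ using translation invariance. The substitution $x\mapsto x+s$ maps $v(\,\cdot+s;a,a_-)$ into $v(\,\cdot\,;a\,e^{iks},a_-e^{-iks})$ where $e^{\pm iks}=(e^{\pm ik_1s},e^{\pm ik_2s})$; since the LS-reduction $w(\mu,v)$ is unique in $B_\varepsilon^{W^s}(0)$, this translation must also be respected by $u$, giving
\[
U_{\alpha,\gamma}(x+s,\mu)=e^{isk_{\alpha,\gamma}}U_{\alpha,\gamma}(x,\mu)
\]
for every $s\in\R$. Setting $x=0$ forces $U_{\alpha,\gamma}(x,\mu)=\hat c_{\alpha,\gamma}(\mu)e^{ik_{\alpha,\gamma}x}$ for some scalar $\hat c_{\alpha,\gamma}(\mu)$. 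Combining with the identity $a^\alpha a_-^\gamma\cdot e^{ik_{\alpha,\gamma}x}=2^{-|\alpha|-|\gamma|}r^{\alpha+\gamma}E^{\alpha-\gamma}$ and absorbing the $2^{-|\alpha|-|\gamma|}$ factor into $\hat u_{\alpha,\gamma}(\mu):=2^{-|\alpha|-|\gamma|}\hat c_{\alpha,\gamma}(\mu)$ produces the stated expansion. The sum starts at $|\alpha|+|\gamma|\geq 1$ because $u(x;\mu,0)=0$ is the trivial solution.

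Third, the symmetry $\hat u_{\alpha,\gamma}=\hat u_{\gamma,\alpha}$ comes from the reflection $x\mapsto -x$, which is a symmetry of \cref{eq:fundamentalequation} thanks to the evenness of $l_\mu$ and $n_{m,\mu}$ in \cref{property1,property2}. Since $v(-x;r,\theta)=v(x;r,-\theta)$, uniqueness of the LS reduction yields $u(-x;\mu,r,\theta)=u(x;\mu,r,-\theta)$. Under $(x,\theta)\mapsto(-x,-\theta)$ each $E_j$ is sent to $E_j^{-1}$, so $E^{\alpha-\gamma}\mapsto E^{\gamma-\alpha}$ while $r^{\alpha+\gamma}$ is unchanged; after relabelling $(\alpha,\gamma)\leftrightarrow(\gamma,\alpha)$ and comparing with the original series, the uniqueness of the power series gives $\hat u_{\alpha,\gamma}=\hat u_{\gamma,\alpha}$. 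Finally, $N(\mu,u)$ is analytic in $(\mu,v)$ by \cref{property2,property4} and the same three steps produce its expansion; the only change is that the sum starts at $|\alpha|+|\gamma|\geq 2$ because $N$ begins at order $m=2$ in $u$, and the identical reflection argument yields $\hat n_{\alpha,\gamma}=\hat n_{\gamma,\alpha}$.

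The main potential pitfall is bookkeeping: translating the analyticity in $v$ (or equivalently in $a$) into the specific $(r,\theta)$-parameterization, and making sure the translation and reflection symmetries of the equation are transported correctly through the LS reduction so that the coefficients really simplify as claimed. None of the individual steps is hard, but they must be assembled carefully so that translation invariance kills the $x$-dependence of the coefficients and reflection invariance kills the $\theta$-dependent half of the would-be free phases.
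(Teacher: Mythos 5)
Your proof is correct, but it takes a genuinely different route from the paper. The paper's proof is constructive: it fixes the first-order coefficients, substitutes the ansatz into the fixed-point relation $w=\mathcal{L}N(\mu,v+w)$, and equates powers of $r$ and Fourier modes order by order; because $\mathcal{L}$ and the $N_m$ act diagonally on the monomials $E^{\alpha-\gamma}$, the form $\hat u_{\alpha,\gamma}r^{\alpha+\gamma}E^{\alpha-\gamma}$ comes out automatically, and the index symmetry $\hat u_{\alpha,\gamma}=\hat u_{\gamma,\alpha}$ follows directly from the evenness of $\ell$ and $n_{m,\mu}$ in the resulting recursions \eqref{eq:uhat}--\eqref{eq:nhat}. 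You instead obtain an abstract power series from the analyticity in \Cref{lemma:LS-reduction}, then use translation equivariance to kill the $x$-dependence of the coefficients and reflection equivariance to get the $(\alpha,\gamma)$-swap symmetry. Your argument is conceptually cleaner in that it isolates exactly which invariance is responsible for which structural feature, and it correctly flags that the equivariances must be transported through the Lyapunov--Schmidt reduction via uniqueness (which also tacitly requires the norms to be translation- and reflection-invariant so that the uniqueness balls are preserved --- an assumption the paper itself uses without comment). What it does not deliver is the explicit recursion \eqref{eq:uhat}--\eqref{eq:nhat}, which the paper treats as part of the proof and relies on later: the identification of $\Psi_3(0,\theta,\mu)$ with $\hat n_{(0,k_1),(k_2-1,0)}$ in \Cref{lemma:factorization} and the induction on $|\alpha|+|\gamma|$ in \Cref{prop:Tdependence} both use these formulas. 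So if you adopt your version, you would still need to derive the recursion separately before Section 4.
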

\begin{proof}
We set $\hat{u}_{(0,0),(0,0)}=0$ and
\[
    \hat{u}_{(1,0),(0,0)}=\hat{u}_{(0,1),(0,0)}=\hat{u}_{(0,0),(1,0)}=\hat{u}_{(0,0),(0,1)}=\frac{1}{2}.
\]
Then we use the fact that $w=\mathcal{L}N(\mu,v+w)$ and equate orders of $r$ and Fourier coefficients at every order yielding
\begin{equation}\label{eq:uhat}
    \hat{u}_{\alpha,\gamma}=\ell(k_{\alpha,\gamma})\sum_{m=2}^{\vert \alpha\vert+\vert \gamma\vert}\sum_{\substack{\sum_{i=1}^n\alpha^{(i)}=\alpha\\
                                \sum_{i=1}^n\gamma^{(i)}=\gamma}}
    n_{m,\mu}(k_{\alpha^{(1)},\gamma^{(1)}},\ldots,k_{\alpha^{(m)},\gamma^{(m)}})\prod_{i=1}^m\hat{u}_{\alpha^{(i)},\gamma^{(i)}}.
\end{equation}
Similarly we get
\begin{equation}\label{eq:nhat}
     \hat{n}_{\alpha,\gamma}=\sum_{m=2}^{\vert \alpha\vert+\vert \gamma\vert}\sum_{\substack{\sum_{i=1}^n\alpha^{(i)}=\alpha\\
                                \sum_{i=1}^n\gamma^{(i)}=\gamma}}
    n_{m,\mu}(k_{\alpha^{(1)},\gamma^{(1)}},\ldots,k_{\alpha^{(m)},\gamma^{(m)}})\prod_{i=1}^m\hat{u}_{\alpha^{(i)},\gamma^{(i)}}.
\end{equation}
The $\alpha$, $\gamma$ symmetry follows from the fact that $\ell$ and $n_{m,\mu}$ are even.
\end{proof}
\subsection*{Factorization} Using this expansion, we can prove the following factorization of \cref{eq:findim}.
\begin{lemma}\label{lemma:factorization}Let $v\in B^{V}_\varepsilon(0)$ and $\mu\in B^{M}_\varepsilon(\mu_0)$ and define $u=v+w$, where $w\in W^s$ is the solution to \cref{eq:infdim} from \Cref{lemma:LS-reduction}, and Let $2\leq k_1<k_2$ be coprime. Then there exist analytic functions $\Psi_1(r,\theta,\mu)$, $\Psi_2(r,\theta,\mu)$, $\Psi_3(r,\theta,\mu)$ and $\Psi_4(r,\theta,\mu)$ such that
\begin{align}
\langle L(\mu)u+N(\mu,u),\cos(k_1(x+\theta))\rangle&=r_1\Psi_1,\label{eq:c1proj}\\
\langle L(\mu)u+N(\mu,u),\cos(k_2(x+\theta))\rangle&=r_2\Psi_2,\label{eq:c2proj}\\
\langle L(\mu)u+N(\mu,u),\sin(k_1(x+\theta))\rangle&=r_1^{k_2-1}r_2^{k_1}\sin(k_1k_2(\theta_2-\theta_1))\Psi_3,\label{eq:s1proj}\\
\langle L(\mu)u+N(\mu,u),\sin(k_2(x+\theta))\rangle&=r_1^{k_2}r_2^{k_1-1}\sin(k_1k_2(\theta_1-\theta_2))\Psi_4,\label{eq:s2proj}
\end{align}
and
\begin{align*}
\Psi_1(0,\theta,\mu)&=l_\mu(k_1),\\
\Psi_2(0,\theta,\mu)&=l_\mu(k_2),\\
\Psi_3(0,\theta,\mu)&=\hat{n}_{(0,k_1),(k_2-1,0)},\\
\Psi_4(0,\theta,\mu)&=\hat{n}_{(k_2,0),(0,k_1-1)}.
\end{align*}
Moreover, $k_1\Psi_3=k_2\Psi_4$.
\end{lemma}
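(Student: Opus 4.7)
The plan is to substitute the Taylor--Fourier series from \cref{lemma:expansions} into each of the four inner products, use the coprimality of $k_1$ and $k_2$ to classify which multi-indices $(\alpha,\gamma)$ contribute, read off the common $r$- and $\theta$-dependent factors, and finally invoke the translation-invariance identity from the preceding lemma to tie $\Psi_3$ to $\Psi_4$. Concretely, I would write
\[
L(\mu)u+N(\mu,u)=\sum_{|\alpha|+|\gamma|\geq 1}\bigl(l_\mu(k_{\alpha,\gamma})\hat u_{\alpha,\gamma}+\hat n_{\alpha,\gamma}\bigr)\,r^{\alpha+\gamma}E^{\alpha-\gamma}
\]
(with $\hat n_{\alpha,\gamma}=0$ at order $|\alpha|+|\gamma|=1$); pairing this with $\cos(k_j(x+\theta_j))$ or $\sin(k_j(x+\theta_j))$ then selects exactly those $(\alpha,\gamma)$ with $k_{\alpha,\gamma}=\pm k_j$.

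Next I would classify the resonant multi-indices and track their phases. Because $\gcd(k_1,k_2)=1$, the equation $(\alpha_1-\gamma_1)k_1+(\alpha_2-\gamma_2)k_2=\pm k_1$ forces $(\alpha_1-\gamma_1,\alpha_2-\gamma_2)=(\pm 1-mk_2,\,mk_1)$ with $m\in\Z$, and $k_2\geq 3$ implies $|\alpha_1-\gamma_1|\geq 1$ in every case, so $r_1$ divides every contribution to the $k_1$-projections (and symmetrically $r_2$ divides the $k_2$-projections). The phase from $E^{\alpha-\gamma}$ collapses to $e^{\pm imk_1k_2(\theta_2-\theta_1)}$, and the swap symmetries $\hat u_{\alpha,\gamma}=\hat u_{\gamma,\alpha}$, $\hat n_{\alpha,\gamma}=\hat n_{\gamma,\alpha}$ pair the two signs into $\cos(mk_1k_2(\theta_2-\theta_1))$ in the cosine projections and $\sin(mk_1k_2(\theta_2-\theta_1))$ in the sine ones. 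The sine case has no $m=0$ contribution, and the Chebyshev identity $\sin(m\phi)=\sin(\phi)U_{m-1}(\cos\phi)$ lets me pull $\sin(k_1k_2(\theta_2-\theta_1))$ out of every $m\neq 0$ term; the minimum-order multi-index surviving this factorisation sits at $m=1$ and produces the extra $r_1^{k_2-1}r_2^{k_1}$ in \cref{eq:s1proj} and, by the mirror bookkeeping, $r_1^{k_2}r_2^{k_1-1}$ in \cref{eq:s2proj}. Analyticity of each $\Psi_i$ in $(r,\theta,\mu)$ is inherited from \cref{property4} and \cref{lemma:LS-reduction}.

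Reading off the $r=0$ values is then almost mechanical. The $|\alpha|+|\gamma|=1$ terms give $\Psi_1(0,\theta,\mu)=l_\mu(k_1)$ and $\Psi_2(0,\theta,\mu)=l_\mu(k_2)$. For $\Psi_3$ the leading multi-index $(\alpha,\gamma)=((0,k_1),(k_2-1,0))$ has $k_{\alpha,\gamma}=k_1\in K_{\mu_0}$, so \cref{eq:uhat} forces $\hat u_{(0,k_1),(k_2-1,0)}=0$ through the resonance $\ell(k_1)=0$; the $l_\mu\hat u$ piece of the coefficient drops out and what remains is $\Psi_3(0,\theta,\mu)=\hat n_{(0,k_1),(k_2-1,0)}$. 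The same resonance argument applied to $(\alpha,\gamma)=((k_2,0),(0,k_1-1))$ yields $\Psi_4(0,\theta,\mu)=\hat n_{(k_2,0),(0,k_1-1)}$.

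For the identity $k_1\Psi_3=k_2\Psi_4$, I would apply the variational/translation lemma preceding this section to the specific test function $v'=\partial_x v=-k_1r_1\sin(k_1(x+\theta_1))-k_2r_2\sin(k_2(x+\theta_2))\in V$. Substituting \cref{eq:s1proj,eq:s2proj} into $\langle L(\mu)u+N(\mu,u),v'\rangle=0$ and using $\sin(k_1k_2(\theta_1-\theta_2))=-\sin(k_1k_2(\theta_2-\theta_1))$ gives
\[
r_1^{k_2}r_2^{k_1}\sin\bigl(k_1k_2(\theta_2-\theta_1)\bigr)\bigl(k_2\Psi_4-k_1\Psi_3\bigr)=0
\]
on an open set, and analyticity of $\Psi_3$ and $\Psi_4$ promotes this to $k_1\Psi_3=k_2\Psi_4$ everywhere. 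I expect the main obstacle to be the combinatorial bookkeeping of minimum $r$-orders in the sine projections, together with the verification that the resonance really forces $\hat u=0$ at the decisive multi-index (so that the leading term of $\Psi_3$ is $\hat n$ alone, not a combination involving $\hat u$); once this is in hand, the variational identity delivers the last assertion cleanly.
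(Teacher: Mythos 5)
Your proposal is correct and follows essentially the same route as the paper's proof: expand the projections via the Taylor--Fourier series of \cref{lemma:expansions}, use coprimality to classify the resonant multi-indices (whence the $r_1$, $r_2$, and $r_1^{k_2-1}r_2^{k_1}$ factors and the extraction of $\sin(k_1k_2(\theta_2-\theta_1))$ from $\sin(pk_1k_2(\theta_2-\theta_1))$), note that $\ell(k_1)=0$ kills the $\hat{u}$ contribution at the decisive multi-index, and derive $k_1\Psi_3=k_2\Psi_4$ from the variational lemma with $v'=\partial_x v$. The one point you flag as a possible obstacle --- that the resonance forces $\hat{u}_{(0,k_1),(k_2-1,0)}=0$ --- is immediate from the definition of $\ell$ together with \cref{eq:uhat}, so no gap remains.
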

\begin{proof} We can write
\begin{align*}
\langle L(\mu)u+N(\mu,u),\cos(k_1(x+\theta))\rangle&=\text{Re}\langle L(\mu)u+N(\mu,u),e^{i(k_1(x+\theta))}\rangle\\
&=l_\mu(k_1)r_1+\sum_{\vert \alpha\vert+\vert\gamma\vert\geq 2}r^{\alpha+\gamma}\hat{n}_{\alpha,\gamma}\text{Re}\langle E^{\alpha-\gamma-(1,0)},1\rangle,
\end{align*}
where $\text{Re}\langle E^{\alpha-\gamma-(1,0)},1\rangle=0$ for all $\alpha$ and $\gamma$ such that $\alpha_1+\gamma_1= 0$.
Similarly,
\begin{align*}
\langle L(\mu)u+N(\mu,u),\sin(k_1(x+\theta))\rangle&=\text{Im}\langle L(\mu)u+N(\mu,u),e^{i(k_1(x+\theta))}\rangle\\
&=\sum_{\vert \alpha\vert+\vert\gamma\vert\geq 2}r^{\alpha+\gamma}\hat{n}_{\alpha,\gamma}\text{Im}\langle E^{\alpha-\gamma-(1,0)},1\rangle,
\end{align*}
where $\text{Im}\langle E^{\alpha-\gamma-(1,0)},1\rangle=0$ unless
\[
\alpha_1-\gamma_1-1=-pk_2,\quad \alpha_2-\gamma_2=pk_1\quad\text{and}\quad pk_1k_2(\theta_2-\theta_1)\neq 0.
\]
This means $\alpha_1+\gamma_1\geq k_2-1$ and $\alpha_2+\gamma_2\geq k_1$ with equality precisely for $p=1$, $\alpha=(0,k_1)$ and $\gamma=(k_2-1,0)$. Moreover, when nonzero, we have $\text{Im}\langle E^{\alpha-\gamma-(1,0)},1\rangle=\sin(pk_2k_1(\theta_2-\theta_1))=g_p(\theta)\sin(k_2k_1(\theta_2-\theta_1))$.

The relation between $\Psi_3$ and $\Psi_4$ follows from
\begin{align*}
0&=\langle L(\mu)u+N(\mu,u),v' \rangle\\
&=-r_1k_1\langle L(\mu)u+N(\mu,u),\sin(k_1(x+\theta_1)) \rangle-r_2k_2 \langle L(\mu)u+N(\mu,u),\sin(k_2(x+\theta_2)) \rangle\\
&=r_1^{k_2}r_2^{k_1}\sin(k_1k_2(\theta_1-\theta_2))(k_1\Psi_3-k_2\Psi_4).
\end{align*}
\end{proof}
\begin{remark}
	Note that the assumption that $ k_1$ and $k_2$ are coprime is only required for \cref{eq:c1proj,eq:c2proj}. If $k_1=1$ we can still find $\Psi_3$ and $\Psi_4$ such that \cref{eq:s1proj,eq:s2proj} holds. Thus, the necessary conditions below are still applicable in the latter case, while the sufficient conditions require $ k_1$ and $k_2$ to be coprime.
\end{remark}
\subsection*{Main result} We are now ready to prove the main result of this paper.
\begin{theorem}[Necessary and sufficient conditions]\label{thm:necessaryandsufficient} Consider \cref{eq:fundamentalequation} satisfying \crefrange{property1}{property4}. Then we have the following:
\begin{itemize}[topsep=14pt, itemsep=14pt]
    \item[(a)] If $\vert K_{\mu_0}\vert = 0$, $\vert K_{\mu_0}\vert = 2$, or $K_{\mu_0}=\{\pm k_1,\pm k_2\}$ and $\hat{n}_{(0,k_1),(k_2-1,0)}(\mu_0)\neq 0$, then there exists no asymmetric solutions $u\in X^{s}(\T)$ to \cref{eq:fundamentalequation} for any $\mu\in B_\varepsilon^M(\mu_0)$.
\item[(b)] If $K_{\mu_0}=\{\pm k_1,\pm k_2\}$ for some coprime $k_1$ and $k_2$, $\hat{n}_{(0,k_1),(k_2-1,0)}(\mu_0)=0$, $\mu=(\mu_1,\mu_2,\mu_3,\ldots)$, and
\begin{equation}\label{eq:transversalitycondition}
\left.\frac{\partial(l_\mu(k_1),l_\mu(k_2),\hat{n}_{(0,k_1),(k_2-1,0)}(\mu))}{\partial(\mu_1,\mu_2,\mu_3)}\right\vert_{\mu=\mu_0}=\left. \frac{\partial(\Psi_1,\Psi_2,\Psi_3)}{\partial(\mu_1,\mu_2,\mu_3)}\right\vert_{r=0,\mu=\mu_0}\neq 0,
\end{equation}
then there exists a manifold of solutions $(u,\mu)\in X^{s}(\T)\times M$ to \cref{eq:fundamentalequation} parameterized by $(r,\theta)$. Moreover, $u$ is asymmetric if and only if $(\theta_1-\theta_2)k_1k_2\notin \pi \mathbb{Z}$, the mapping $(r,\theta)\mapsto (u,\mu)$ is analytic and
\[
u=r_1\cos(k_1(x+\theta_1))+r_2\cos(k_2(x+\theta_2))+\mathcal{O}(r^2),\qquad
\mu=\mu_0+\mathcal{O}(r^2).
\]
\end{itemize}
\end{theorem}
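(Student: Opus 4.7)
The plan is to reduce \cref{eq:fundamentalequation} to the four scalar projection identities supplied by \Cref{lemma:factorization} and then analyze them case by case. For part~(a), the two subcases $|K_{\mu_0}|=0$ and $|K_{\mu_0}|=2$ are already settled by \Cref{lemma:TrivialKernel} and \Cref{lemma:KernelSize2}, so only the third subcase requires argument. Assume $K_{\mu_0}=\{\pm k_1,\pm k_2\}$ and $\hat n_{(0,k_1),(k_2-1,0)}(\mu_0)\neq 0$. Any sufficiently small solution decomposes via \Cref{lemma:LS-reduction} as $u=v+w(v,\mu)$ and must satisfy \cref{eq:c1proj,eq:c2proj,eq:s1proj,eq:s2proj}. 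Since $\Psi_3(0,\theta,\mu_0)=\hat n_{(0,k_1),(k_2-1,0)}(\mu_0)\neq 0$, continuity keeps $\Psi_3$ nonzero throughout a neighborhood of $(r,\mu)=(0,\mu_0)$, and then \cref{eq:s1proj} forces $r_1=0$, $r_2=0$, or $\sin(k_1k_2(\theta_2-\theta_1))=0$. Each of these alternatives renders $v$ symmetric, so \Cref{lemma:asymmetryequivalence} gives $u$ symmetric as well.

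For part~(b), the strategy is to view $\Psi_1=\Psi_2=\Psi_3=0$ as a system of three scalar equations in the three unknowns $(\mu_1,\mu_2,\mu_3)$, with $(r,\theta)$ as parameters and the remaining components of $\mu$ frozen at their $\mu_0$-values. At $(r,\mu)=(0,\mu_0)$ all three vanish by \Cref{lemma:factorization} combined with the hypotheses $l_{\mu_0}(k_1)=l_{\mu_0}(k_2)=0$ and $\hat n_{(0,k_1),(k_2-1,0)}(\mu_0)=0$, and the transversality assumption \cref{eq:transversalitycondition} is exactly the invertibility of the corresponding $3\times 3$ Jacobian at that point. The analytic implicit function theorem of \cite{Buffoni_2016} thus produces unique analytic functions $\mu_j=\mu_j(r,\theta)$ for $j=1,2,3$ solving $\Psi_1=\Psi_2=\Psi_3=0$, with $\mu(0,\theta)=\mu_0$. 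The fourth equation \cref{eq:s2proj} is not an independent constraint: the identity $k_1\Psi_3=k_2\Psi_4$ from \Cref{lemma:factorization} forces $\Psi_4$ to vanish wherever $\Psi_3$ does.

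Composing with $w=w(v,\mu(r,\theta))$ from \Cref{lemma:LS-reduction} yields the claimed analytic family $(u,\mu)$ parameterized by $(r,\theta)$, and the order estimates for $u$ and $\mu$ are read off from the Taylor--Fourier expansion of \Cref{lemma:expansions} together with the fact that the nonlinear contributions to $\Psi_1,\Psi_2,\Psi_3$ start at quadratic order in $r$. For the asymmetry dichotomy, \Cref{lemma:asymmetryequivalence} reduces the question to asymmetry of the kernel part $v=r_1\cos(k_1(x+\theta_1))+r_2\cos(k_2(x+\theta_2))$. A direct calculation, using that the coprimality of $k_1$ and $k_2$ yields $\{n_1k_2-n_2k_1:n_1,n_2\in\Z\}=\Z$, shows that $v$ can be shifted to an even function precisely when one of $r_1,r_2$ vanishes or $k_1k_2(\theta_1-\theta_2)\in\pi\Z$; on the complement of this set, $v$ and hence $u$ is asymmetric.

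The main technical hurdle has already been discharged inside \Cref{lemma:factorization}, which isolates the symmetry-preserving factor $r_1^{k_2-1}r_2^{k_1}\sin(k_1k_2(\theta_2-\theta_1))$ from the sine projections and establishes the identity $k_1\Psi_3=k_2\Psi_4$ needed to collapse four scalar equations to three. Relative to that, the present step is essentially a direct application of the analytic IFT with three parameters; the only care required is in matching the three equations $\Psi_1=\Psi_2=\Psi_3=0$ with the three parameters $(\mu_1,\mu_2,\mu_3)$ singled out by \cref{eq:transversalitycondition}, and in tracking the $r$-order of the correction to $\mu$ through the Taylor--Fourier expansion.
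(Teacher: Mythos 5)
Your proposal is correct and follows essentially the same route as the paper's proof: parts $|K_{\mu_0}|=0$ and $|K_{\mu_0}|=2$ via \Cref{lemma:TrivialKernel} and \Cref{lemma:KernelSize2}, the third necessary condition via the factorization \cref{eq:s1proj} and \Cref{lemma:asymmetryequivalence}, and the sufficient part by applying the analytic implicit function theorem to $(\Psi_1,\Psi_2,\Psi_3)=0$ in $(\mu_1,\mu_2,\mu_3)$, with $\Psi_4$ handled by the identity $k_1\Psi_3=k_2\Psi_4$. If anything, you supply more detail than the paper does on the vanishing of the $\Psi_i$ at $(0,\mu_0)$, the $\mathcal{O}(r^2)$ estimates, and the coprimality argument behind the asymmetry dichotomy.
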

\begin{proof} We begin by proving the necessary conditions (a). The first condition is due to the fact that if $\vert K_{\mu_0}\vert = 0$, then there are no nonzero solutions for $\cref{eq:fundamentalequation}$ in a neighborhood of $\mu_0$, by \Cref{lemma:TrivialKernel}. The second condition, if $\vert K_{\mu_0}\vert = 2$, means that every solution $(u,\mu)\in X^s(\T)\times M$ in a neighborhood of $(0,\mu_0)$ is symmetric due to \cref{lemma:KernelSize2}. Now, if $K_{\mu_0}=\{\pm k_1,\pm k_2\}$ then for every asymmetric $v\in B_\varepsilon^V(0)$ and $\mu\in B_\varepsilon^M(\mu_0)$ we construct $u=v+w$ with $w\in W^s$ from \Cref{lemma:LS-reduction} else \cref{eq:infdim} is not satisfied. However, if $\hat{n}_{(0,k_1),(k_2-1,0)}(\mu_0)\neq 0$ then we have $\langle L(\mu)u+N(\mu,u),\sin(k_1(x+\theta))\rangle\neq 0$ so \cref{eq:findim} is not satisfied, whence neither is \cref{eq:fundamentalequation}.

We proceed by proving the sufficient conditions (b). By \cref{lemma:LS-reduction} we obtain a solution $w\in W^s$ to \cref{eq:infdim} for any $v\in B_\varepsilon^V(0)$ and $\mu\in B_\varepsilon^M(\mu_0)$. Then, by our assumptions, we can apply the analytic implicit function theorem to find $\mu\in B_\varepsilon^M(\mu_0)$ such that $(\Psi_1,\Psi_2,\Psi_3)=0$ and thus, by \Cref{lemma:factorization}, \cref{eq:findim} is satisfied for any $v\in B_\varepsilon^V(0)$. Now, parametrizing $V$ with $(r,\theta)$ and using \cref{lemma:asymmetryequivalence} to determine when $u$ is asymmetric gives the desired result.
\end{proof}
\begin{remark}
What was done in \cite{Maehlen_2023} falls somewhere between these two results. The necessary conditions are satisfied, but a weaker condition than \cref{eq:transversalitycondition} is used to prove the existence. However, the numerical results suggest that \cref{eq:transversalitycondition} is true. It should be noted that actually checking \cref{eq:transversalitycondition} is in general very technical. In fact, even finding $\mu_0$ such that $\hat{n}_{(0,k_1),(k_2-1,0)}(\mu_0)=0$ can be very technical.
\end{remark}
\begin{remark} It is possible to imagine a similar result with somewhat different assumptions on the equation, in particular, \cref{property3,property4}. The main use of \Cref{property3} is to show that $\Psi_4=0$ if and only if $\Psi_3=0$. If this equivalence does not hold, it would be possible to solve $(\Psi_1,\Psi_2,\Psi_3,\Psi_4)=0$ instead, analogously to how we solve $(\Psi_1,\Psi_2,\Psi_3)=0$ in the theorem. At least if the equation contains another parameter with the correct properties. This would give another, but similar, result for sufficient conditions without assuming \cref{property3}. 

Moreover, we can also imagine a simple modification that allows us to lower the regularity in \cref{property4}. If we know that we are interested in some particular wavenumbers $k_1$ and $k_2$ a priori, then we can work with $F\in C^{k_1+k_2}(M\times X^s; X^t)$. The expansions in \cref{lemma:expansions} simply have to be truncated.
If we also restrict ourselves to $r$ so that
\[
r_1\lesssim r_2 \lesssim r_1.
\]
then the remainder immediately gets the necessary asymptotic behavior as $r\to 0$. It may be possible without this restriction, but then some estimates for the remainder are necessary to obtain the factorizations in \cref{lemma:factorization}. In the end, we also only end up with solutions $(u,\mu)$, which are $C^1$ with respect to $(r,\theta)$.
\end{remark}
\section{Application to the infinite depth water wave problem}\label{sec:example}
\subsection*{An infinite depth Whitham equation} Before considering the water wave problem, we begin by studying a modified version of the capillary-gravity Whitham equation obtained by introducing a depth parameter, $d$, which we let tend to infinity. This allows us to contrast the result in \cite{Maehlen_2023} with an infinite depth result. The purpose of this is to observe the same relation between finite depth and infinite depth for the water wave problem.

This depth dependent capillary-gravity Whitham equation is given by
\begin{equation}
u_t+(M_{T,d}u+u^2)_x=0.
\end{equation}
As in \cref{eq:Whitham}, $M_{T,d}$ is a spatial Fourier multiplier
\[
\widehat{M_{T,d} u}(t, \xi) = m_{T,d} (\xi)\widehat{u}(t, \xi),
\]
but here the symbol $m_{T,d}$ is given by
\[
m_{T,d}(\xi)=\sqrt{\frac{(1+T\xi^2)\tanh(\xi d)}{\xi}}
\]

    We assume periodic solutions with period $2\pi/\kappa$ and rescale the period to $2\pi$ by changing variables. Moreover, we introduce the traveling wave assumption and integrate the equation that yields
    \begin{equation}\label{eq:WhithamVarDepth}
        -cu+M_{\kappa,T,d}u+u^2=0,
    \end{equation}
    where
    \[
        M_{\kappa,T,d}e^{ikx}=m_{T,d}(\kappa k)e^{ikx}=\sqrt{\frac{(1+T\kappa^2k^2)\tanh(\kappa k d)}{\kappa k}}e^{ikx}.
    \]
For this equation we use the Zygmund spaces, so $X^s(\T)=\mathcal{C}^s(\T)$ for some $s>1$.
We begin by noting that for an arbitrary but finite depth this equation can be transformed back into the original equation by the change of variables
\[
   u\mapsto\tilde{u}=u/d^{1/2},\qquad c\mapsto\tilde{c}=c/d^{1/2},\qquad \kappa\mapsto\tilde{\kappa}=\kappa d,\qquad\text{and}\qquad T\mapsto\tilde{T}=T/d^2.
\]
This turns \cref{eq:WhithamVarDepth} into
\begin{equation}\label{eq:WhithamDepth1}
        d(-\tilde{c}\tilde{u}+M_{\tilde{\kappa},\tilde{T},1}\tilde{u}+\tilde{u}^2)=0,
\end{equation}
which is equivalent to the equation considered in \cite{Maehlen_2023}. Whence for a general finite depth we get a solution if and only if we have a solution to the equation with normalized depth. Assuming that we have a sequence of asymmetric solutions with depth tending to infinity but with bounded surface tension, we arrive at a contradiction; through the change of variables above, we would obtain a sequence of solutions with normalized depth and surface tension tending to zero. This is not expected, as in \cite{Maehlen_2023} it is shown that $\lim_{T\to 0}\hat{n}_{(0,k_1),(k_2-1,0)}=\pm\infty$.

The argument above shows that we cannot reach an asymmetric infinite depth solution to \cref{eq:WhithamVarDepth} as the limit of finite depth solutions. However, asymmetric infinite depth solutions could exist without being the limit of finite depth solutions. To rule out this possibility, we also return to \cref{eq:WhithamVarDepth}, but now the plan is to take the limit $d\to \infty$ before solving the equation. However, to do so, we change the equation to one in the Zygmund space of functions of zero mean
\[
\ac{\circ}{\mathcal{C}}^s(\T)=\left\{u\in\mathcal{C}^s(\T):\frac{1}{\vert \T\vert}\int_\T u\,dx=0\right\}
\]
obtaining
\[
-cu+M_{\kappa,T,d}u+\ac{\circ}{P}u^2=0,
\]
where $u\in\ac{\circ}{\mathcal{C}}^s(\T)$ and $\ac{\circ}{P}$ denotes the projection from ${\mathcal{C}}^s(\T)$ onto $\ac{\circ}{\mathcal{C}}^s(\T)$. It is possible to show that this equation is equivalent to \cref{eq:WhithamVarDepth} by a Galilean transformation and redefining the constant of integration. The reason for rewriting the equation this way is that $m_{T,d}(0)$ is defined through a limit, which does not commute with the limit $d\to \infty$. Thus, with zero mean, we can, without ambiguity, take the limit $d\to \infty$ and obtain
\begin{equation}\label{eq:WhithamInfDepth}
-cu+M_{\kappa,T,\infty}u+\ac{\circ}{P}u^2=0
\end{equation}
where the Fourier multiplier $M_{\kappa,T,\infty}$ now is given by
\[
M_{\kappa,T,\infty}e^{ikx}=m_{T,\infty}(\kappa k)e^{ikx}= \sqrt{\frac{1}{\kappa\vert k\vert}+T\kappa\vert k\vert}e^{ikx}.
\]
We begin by checking that this equation belongs to the class of equations considered in this paper.
\begin{lemma} The \crefrange{property1}{property4} are satisfied for \cref{eq:WhithamInfDepth} with $X^{s}(\T)=\ac{\circ}{\mathcal{C}}^s(\T)$, $X^{t}(\T)=\ac{\circ}{\mathcal{C}}^{s-1/2}(\T)$, $s>1$, where $\mu=(c,\kappa,T)$, $L(\mu)u=-cu+M_{\kappa,T,\infty}u$, and $N(\mu,u)=\ac{\circ}{P}u^2$.
\end{lemma}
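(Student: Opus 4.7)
The plan is to verify the four properties in turn. \Cref{property1} carries the bulk of the work; the remaining three reduce to short verifications given the explicit forms of $L$ and $N$. Throughout I would take $M$ to be an open subset of $\{(c,\kappa,T)\in\R^3 : \kappa>0,\ T\geq 0\}$ so that the radicand $\frac{1}{\kappa|k|}+T\kappa|k|$ in $m_{T,\infty}(\kappa k)$ is strictly positive for every $k\in\Z\setminus\{0\}$.

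For \Cref{property1}, I would first observe that on $\ac{\circ}{\mathcal{C}}^s(\T)$ the operator $L(\mu)$ is diagonal in the Fourier basis with symbol
\[
l_\mu(k) = -c + \sqrt{\tfrac{1}{\kappa |k|} + T\kappa |k|}, \qquad k\in\Z\setminus\{0\},
\]
which is manifestly even in $k$. Boundedness $\ac{\circ}{\mathcal{C}}^s(\T)\to \ac{\circ}{\mathcal{C}}^{s-1/2}(\T)$ would follow from the Littlewood-Paley characterization of the Zygmund norm together with $|l_\mu(k)|\lesssim |k|^{1/2}$ at infinity, so that $L(\mu)$ behaves as a Fourier multiplier of order $1/2$ (cf.\ \cite{Taylor_1996}). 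For the Fredholm property the key observation is that $|l_\mu(k)|\to\infty$ as $|k|\to\infty$, so there exists $N=N(\mu)$ with $|l_\mu(k)|$ bounded away from zero for $|k|>N$. Splitting both domain and codomain by frequency at this threshold yields a block-diagonal decomposition $L(\mu)=L^{\mathrm{low}}\oplus L^{\mathrm{high}}$ in which $L^{\mathrm{low}}$ acts on a finite-dimensional space (trivially Fredholm of index $0$) and $L^{\mathrm{high}}$ is boundedly invertible with inverse symbol $1/l_\mu$ of order $-1/2$. Combining the two gives Fredholm of index $0$.

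\Cref{property2} is essentially automatic: $N(\mu,u)=\ac{\circ}{P}u^2$ is purely quadratic, so $N_m\equiv 0$ for $m\geq 3$ and $N_2(\mu,(u_1,u_2)) = \ac{\circ}{P}(u_1 u_2)$; evaluating on exponentials gives the symbol $n_{2,\mu}(k_1,k_2)=1$ if $k_1+k_2\neq 0$ and $0$ otherwise, which is manifestly even in $(k_1,k_2)$. For \Cref{property3} I would take
\[
\mathcal{J}_\mu(u) = \int_\T \left(-\tfrac{c}{2}u^2 + \tfrac{1}{2}\,u\,M_{\kappa,T,\infty}u + \tfrac{1}{3}u^3\right)dx
\]
and check that $D_u\mathcal{J}_\mu(u)w = \langle L(\mu)u+N(\mu,u),w\rangle$ for every $w\in\ac{\circ}{\mathcal{C}}^s(\T)$: the projection $\ac{\circ}{P}$ inside $N$ is invisible in the pairing because $w$ has zero mean, and self-adjointness of $M_{\kappa,T,\infty}$ uses that its symbol is real and even. \Cref{property4} follows because the nonlinearity is a fixed quadratic polynomial and, for each $k\neq 0$, the map $(c,\kappa,T)\mapsto l_\mu(k)$ is real-analytic on $M$ (the square root is analytic on the strictly positive radicand); uniform-in-$k$ control of the $\mu$-derivatives transfers analyticity to the operator $L(\mu)$ in the sense of \cite{Buffoni_2004}.

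The main obstacle I anticipate is really concealed inside \Cref{property1}: one must carefully justify the order-$1/2$ symbolic calculus on the Zygmund scale rather than on $L^2$-based Sobolev spaces, where Plancherel would trivialize the argument. I would either invoke the multiplier calculus directly from \cite{Taylor_1996} or verify the necessary estimates by hand using the dyadic pieces $\{\phi_j\}$ introduced in \Cref{sec:prel}; beyond this single technical point, the remaining content reduces to routine algebraic checks.
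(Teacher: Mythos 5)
Your proposal is correct, and Properties (ii)--(iv) are handled essentially as in the paper (identical quadratic symbol, the same functional $\mathcal{J}_\mu(u)=\int_\T \frac12 u(M_{\kappa,T,\infty}-c)u+\frac13 u^3\,dx$ written in expanded form, and analyticity via termwise analyticity of the symbol with uniform-in-$k$ control). Where you genuinely diverge is the Fredholm part of \Cref{property1}. The paper factors $L(\mu)=M_{\kappa,T,\infty}\circ(I-cM_{\kappa,T,\infty}^{-1})$, uses the two-sided bound $\vert k\vert^{1/2}\lesssim m_{T,\infty}(\kappa k)\lesssim\vert k\vert^{1/2}$ to invert $M_{\kappa,T,\infty}:\ac{\circ}{\mathcal{C}}^{s}\to\ac{\circ}{\mathcal{C}}^{s-1/2}$, and then gets Fredholmness of index $0$ from ``identity plus compact,'' the compactness coming from the compact embedding $\ac{\circ}{\mathcal{C}}^{s+1/2}\subset\subset\ac{\circ}{\mathcal{C}}^{s}$. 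You instead split by frequency at a threshold $N(\mu)$ beyond which $\vert l_\mu(k)\vert$ is bounded below, obtaining a finite-dimensional block (trivially index $0$) plus an invertible high-frequency block. Both are valid; your route avoids invoking the compact embedding of the Zygmund scale and makes the index computation completely explicit, at the cost of having to justify boundedness of the multiplier with symbol $1/l_\mu$ on high frequencies --- a point you correctly flag as the real technical content, and which the paper's argument needs in the equivalent form of invertibility of $M_{\kappa,T,\infty}$. Two small caveats: your claim $\vert l_\mu(k)\vert\to\infty$ and the asserted target space $\ac{\circ}{\mathcal{C}}^{s-1/2}(\T)$ both require $T>0$ (at $T=0$ the symbol $m_{T,\infty}(\kappa k)$ decays), so you should restrict the parameter set to $T>0$ rather than $T\geq 0$; and in checking \Cref{property3} note that the nonlinear part of the pairing is cubic, so the factor $\frac13$ rather than $\frac12$ is what makes $D_u\mathcal{J}_\mu(u)w$ come out right --- your stated functional already has this correct.
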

\begin{proof}
For \cref{property1} we begin by noting that the linear part is obviously a Fourier multiplier with even symbol. Moreover, we have
\[
\vert k\vert^{1/2}\lesssim m_{T,\infty}(\kappa k)\lesssim \vert k\vert^{1/2},
\]
which means that $M_{\kappa, T,\infty}:\ac{\circ}{\mathcal{C}}^{s}(\T)\to\ac{\circ}{\mathcal{C}}^{s-1/2}(\T)$ is invertible. It follows that the mapping $u\mapsto u-cM^{-1}_{\kappa,T,\infty}u$ as an operator on $\ac{\circ}{\mathcal{C}}^{s}(\T)$ is the identity mapping plus a compact map due to the compact embedding $\ac{\circ}{\mathcal{C}}^{s+1/2}(\T)\subset\subset\ac{\circ}{\mathcal{C}}^{s}(\T)$, so the map is Fredholm and so is
\[
L(\mu)=M_{\kappa,T,\infty}\circ(I-cM^{-1}_{\kappa,T,\infty}).
\]

For \cref{property2} we note that
\[
\ac{\circ}{P}u^2=N_2(u,u),
\]
where $N_2(e^{ik_1x},e^{ik_2x})=n_2(k_1,k_2)e^{i(k_1+k_2)x}$ for
\[
n_2(k_1,k_2)=\left\{
        \begin{aligned}
        &0 &&\text{if }k_1+k_2=0,\\
        &1 &&\text{if }k_2+k_2\neq 0.
        \end{aligned}
        \right.
\]

    \Cref{property3} follows because we can define the functional $\mathcal{J}_\mu:\ac{\circ}{\mathcal{C}}^{s}\to \R$ by
\[
\mathcal{J}_\mu(u)=\int_\T \frac{1}{2}u(M_{\kappa,T,\infty}-c)u+\frac{1}{3}u^3\, dx.
\]

    For \cref{property4} we use the fact that the map $u\mapsto F(\mu,u)$ is quadratic and only the linear part depends on $\mu$. Moreover, $(\kappa,T)\to m_{T,\infty}(\kappa k)$ is analytic around any point $(\kappa_0,T_0)\in \R_+^2$, with a radius of convergence that is uniformly bounded below for all $\vert k\vert\geq 1$, so $(c,\kappa,T)\mapsto -c+M_{\kappa,T,\infty}\in \mathcal{L}(\ac{\circ}{\mathcal{C}}^s(\T),\ac{\circ}{\mathcal{C}}^{s-1/2}(\T))$ is analytic. That is, we can write
    \[
        -cu+M_{\kappa,T,\infty}u+\ac{\circ}{P}u^2=-c_0u-(c-c_0)u+\ac{\circ}{P}u^2+\sum_{p,q\geq 0}(\kappa-\kappa_0)^p(T-T_0)^q M^{p,q}u,
    \]
    where $M^{p,q}$ is a Fourier multiplier acting on $\ac{\circ}{\mathcal{C}}^s$ through $e^{ikx}\mapsto  \frac{\partial_{\kappa}^p\partial_{T}^qm_{T_0,\infty}(\kappa_0 k)}{p!q!}e^{ikx}$.
\end{proof}
With this result in hand, we can prove the following result.
\begin{proposition}\label{prop: c0AndKappa0AreDeterminedByK1K2AndT}
    For any $T\in \R_+$ and integers $1\leq k_1<k_2$ there exists a unique pair $(c_0,\kappa_0)\in \mathbb{R}_+^2$ such that
    \begin{align*}
        m_{T,\infty}(\kappa_0 k_1) = c_0 = m_{T,\infty}(\kappa_0 k_2).
    \end{align*}
We further have $m_{T,\infty}'(\kappa_0 k_1)<0<m_{T,\infty}'(\kappa_0 k_2)$, and $c_0=c_0(T ;k_1,k_2)$ and $\kappa_0=\kappa_0(T;k_1,k_2)$ are explicitly given by
    \begin{align*}
        c_0 = T^{1/4}\sqrt{\sqrt{\frac{k_1}{k_2}}+\sqrt{\frac{k_2}{k_1}}} \quad \text{ and }\quad \kappa_0 =\frac{1}{\sqrt{k_1k_2 T}}.
    \end{align*}
\end{proposition}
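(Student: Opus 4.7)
The approach is a direct algebraic manipulation exploiting the simple form of $m_{T,\infty}^{2}(\xi)=\frac{1}{|\xi|}+T|\xi|$. Since $m_{T,\infty}$ is positive, the condition $m_{T,\infty}(\kappa_0 k_1)=m_{T,\infty}(\kappa_0 k_2)$ is equivalent to the same equality for the squared symbols, which is a rational equation in $\kappa_0$ and therefore much easier to handle than the original square-root equation.

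First, I would equate $m_{T,\infty}^{2}(\kappa_0 k_1)=m_{T,\infty}^{2}(\kappa_0 k_2)$, which yields
\[
\frac{1}{\kappa_0 k_1}-\frac{1}{\kappa_0 k_2}=T\kappa_0(k_2-k_1).
\]
Factoring $(k_2-k_1)$ on both sides (permissible because $k_1<k_2$) and solving for $\kappa_0>0$ gives $\kappa_0^{2}=\frac{1}{Tk_1 k_2}$, and this positive root is unique. Substituting back into $m_{T,\infty}^{2}(\kappa_0 k_1)$, a direct computation produces
\[
c_0^{2}=\sqrt{T}\left(\sqrt{k_1/k_2}+\sqrt{k_2/k_1}\right),
\]
which on taking the positive square root gives the claimed closed-form expression for $c_0$. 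Uniqueness of $(c_0,\kappa_0)$ in $\R_+^{2}$ follows because $\kappa_0$ was uniquely determined and $c_0>0$ is then forced by the equation.

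For the sign of the derivatives, I would differentiate $m_{T,\infty}^{2}(\xi)=\xi^{-1}+T\xi$ to obtain $(m_{T,\infty}^{2})'(\xi)=-\xi^{-2}+T$, which vanishes exactly at $\xi=1/\sqrt{T}$, is negative for $\xi<1/\sqrt{T}$, and positive for $\xi>1/\sqrt{T}$. Since $m_{T,\infty}>0$, the sign of $m_{T,\infty}'$ matches that of $(m_{T,\infty}^{2})'$. Now $\kappa_0 k_1=\sqrt{k_1/(Tk_2)}$ and $\kappa_0 k_2=\sqrt{k_2/(Tk_1)}$, so $k_1<k_2$ places $\kappa_0 k_1$ strictly below $1/\sqrt{T}$ and $\kappa_0 k_2$ strictly above it, yielding the stated inequality $m_{T,\infty}'(\kappa_0 k_1)<0<m_{T,\infty}'(\kappa_0 k_2)$.

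There is no real obstacle here; the only subtle point is being careful that the candidate $\kappa_0$ obtained by dividing by $(k_2-k_1)$ genuinely realizes the equality (which a substitution check confirms) and that we have not lost solutions along the way, which is guaranteed since $\kappa_0\mapsto\kappa_0^{2}$ is injective on $\R_+$. The entire proposition thus reduces to the two symmetric-function identities above plus a monotonicity observation for $\xi\mapsto\xi^{-1}+T\xi$.
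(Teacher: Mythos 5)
Your proposal is correct and follows essentially the same route as the paper: the paper likewise reduces the equality to the squared (rational) equation $\frac{1}{k_1\kappa}+Tk_1\kappa=\frac{1}{k_2\kappa}+Tk_2\kappa$, solves for the unique positive $\kappa_0$, substitutes back to get $c_0$, and determines the sign of $m_{T,\infty}'$ from $m_{T,\infty}'(\xi)=\bigl(-\xi^{-2}+T\bigr)/\bigl(2m_{T,\infty}(\xi)\bigr)$, which is exactly your $(m_{T,\infty}^2)'/(2m_{T,\infty})$ observation. No gaps.
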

\begin{proof} We begin by solving $m_{T,\infty}(\kappa k_1)=m_{T,\infty}(\kappa k_2)$, which is equivalent to
\[
\frac{1}{k_1\kappa}+Tk_1\kappa=\frac{1}{k_2\kappa}+Tk_2\kappa.
\]
This equation has a unique positive solution
\[
\kappa_0 =\frac{1}{\sqrt{k_1k_2 T}}
\]
Evaluating $m_{T,\infty}(\kappa k_1)$ or $m_{T,\infty}(\kappa k_2)$ gives
\[
c_0 = T^{1/4}\sqrt{\sqrt{\frac{k_1}{k_2}}+\sqrt{\frac{k_2}{k_1}}}.
\]
for $\xi>0$ we have
\[
m_{T,\infty}'(\xi)=\frac{-\frac{1}{\xi^2}+T}{2m_{T,\infty}(\xi)}.
\]
Evaluating at $\kappa_0 k_1$ and $\kappa_0 k_2$, we obtain $m_{T,\infty}'(\kappa_0 k_1)<0<m_{T,\infty}'(\kappa_0 k_2)$.
\end{proof}
Here we have explicit expressions for $c_0$ and $\kappa_0$, which means that we can prove stronger results about $\hat{n}_{(0,k_1),(k_2-1,0)}$ than done for $d=1$ in \cite{Maehlen_2023}.
\begin{proposition}\label{prop:Tdependence} For any $(c_0, \kappa_0, T_0)$ such that $\dim \ker{(-c_0+M_{\kappa_0,T_0,\infty})}=4$ there exists a constant $C=C(k_1,k_2)$ such that
\[
\hat{n}_{(0,k_1),(k_2-1,0)}(c_0, \kappa_0, T_0)=CT_0^{-\frac{k_1+k_2-3}{4}}.
\]
\end{proposition}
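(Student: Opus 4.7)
The plan is to isolate a clean $T_0$-scaling of the linear symbol at the resonant point and propagate it through the Taylor--Fourier recursion of \Cref{lemma:expansions}. By hypothesis, $\dim\ker(-c_0+M_{\kappa_0,T_0,\infty})=4$, so \Cref{prop: c0AndKappa0AreDeterminedByK1K2AndT} fixes $(c_0,\kappa_0)$ in terms of $T_0$ and the two wavenumbers $k_1<k_2$. A direct computation with $\kappa_0=1/\sqrt{k_1k_2T_0}$ yields
\[
\frac{1}{\kappa_0|k|}+T_0\kappa_0|k|=\sqrt{T_0}\,\frac{k^2+k_1k_2}{|k|\sqrt{k_1k_2}},
\]
so $m_{T_0,\infty}(\kappa_0 k)=T_0^{1/4}\tilde m(k;k_1,k_2)$ and $c_0=T_0^{1/4}\tilde c(k_1,k_2)$ for explicit $T_0$-free expressions $\tilde m,\tilde c$. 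Consequently the full linear symbol factors as $l_{\mu_0}(k)=T_0^{1/4}\lambda(k;k_1,k_2)$ with $\lambda$ independent of $T_0$, and the inverse symbol satisfies
\[
\ell(k)=T_0^{-1/4}\tilde{\ell}(k;k_1,k_2),
\]
where $\tilde{\ell}$ is the $T_0$-free function that equals $1/\lambda(k)$ off $K_{\mu_0}$ and vanishes on $K_{\mu_0}$ (consistent with $\lambda$ vanishing precisely there). Since the nonlinearity is purely quadratic, the only $n_{m,\mu}$ that contributes is $n_2\in\{0,1\}$, which carries no $T_0$-dependence.

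I would then establish by induction on $n:=|\alpha|+|\gamma|$ that $\hat u_{\alpha,\gamma}(c_0,\kappa_0,T_0)=T_0^{-(n-1)/4}\tilde u_{\alpha,\gamma}(k_1,k_2)$ for some $T_0$-free coefficient $\tilde u_{\alpha,\gamma}$. The base case $n=1$ is immediate since the four prescribed coefficients equal $\tfrac{1}{2}$. For the inductive step, \cref{eq:uhat} collapses to the binary sum
\[
\hat u_{\alpha,\gamma}=\ell(k_{\alpha,\gamma})\sum_{\substack{\alpha^{(1)}+\alpha^{(2)}=\alpha\\ \gamma^{(1)}+\gamma^{(2)}=\gamma}}n_2(k_{\alpha^{(1)},\gamma^{(1)}},k_{\alpha^{(2)},\gamma^{(2)}})\,\hat u_{\alpha^{(1)},\gamma^{(1)}}\hat u_{\alpha^{(2)},\gamma^{(2)}},
\]
whose inner product contributes $T_0^{-(n-2)/4}$ by the inductive hypothesis and whose outer $\ell$ supplies another $T_0^{-1/4}$, yielding the stated exponent. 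The very same sum, minus the outer $\ell$, expresses $\hat n_{\alpha,\gamma}$ in \cref{eq:nhat}, so for all $n\geq 2$ one obtains $\hat n_{\alpha,\gamma}=T_0^{-(n-2)/4}\tilde n_{\alpha,\gamma}(k_1,k_2)$.

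Specializing to $\alpha=(0,k_1)$, $\gamma=(k_2-1,0)$ one has $n=k_1+k_2-1$, and the claim follows with $C(k_1,k_2):=\tilde n_{(0,k_1),(k_2-1,0)}(k_1,k_2)$. The only delicate point is the bookkeeping around $k\in K_{\mu_0}$: the scaling $\ell(k)=T_0^{-1/4}\tilde{\ell}(k)$ remains valid there because both sides are simply zero, and the recursion is unaffected. No convergence estimates are needed; the proposition is a purely algebraic identity at a single point in parameter space, so the argument reduces to a dimensional-analysis calculation leveraging the explicit form of $\kappa_0,c_0$ from \Cref{prop: c0AndKappa0AreDeterminedByK1K2AndT}.
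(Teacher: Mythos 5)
Your argument is correct and follows essentially the same route as the paper: use the explicit formulas for $(c_0,\kappa_0)$ from \Cref{prop: c0AndKappa0AreDeterminedByK1K2AndT} to factor out $T_0^{1/4}$ from the linear symbol (hence $T_0^{-1/4}$ from $\ell$), then induct on $|\alpha|+|\gamma|$ through the quadratic recursion \cref{eq:uhat} and read off the exponent from \cref{eq:nhat}. The only cosmetic difference is that the paper also records explicitly that $C_{\alpha,\gamma}=0$ when $\alpha=\gamma$, which your observation that $n_2$ vanishes on wavenumbers summing to zero already covers.
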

\begin{proof} From \Cref{prop: c0AndKappa0AreDeterminedByK1K2AndT} we have $c_0=T_0^{1/4}\sqrt{\sqrt{\frac{k_1}{k_2}}+\sqrt{\frac{k_2}{k_1}}}$ and $\kappa_0=\frac{1}{\sqrt{k_1k_2T_0}}$. It follows that
\[
\ell(k_{\alpha,\gamma})=\frac{1}{-c_0+m_{T,\infty}(\kappa_0 k_{\alpha,\gamma})}=\frac{1}{T_0^{1/4}\left(-\sqrt{\sqrt{\frac{k_1}{k_2}}+\sqrt{\frac{k_2}{k_1}}}+\sqrt{\frac{\sqrt{k_1k_2}}{\vert k_{\alpha,\gamma}\vert}+\frac{\vert\kappa_{\alpha,\gamma}\vert}{\sqrt{k_1k_2}}}\right)}.
\]
This allows us to show that there exist constants $C_{\alpha,\gamma}=C_{\alpha,\gamma}(k_1,k_2)$ such that
\[
\hat{u}_{\alpha,\gamma}=C_{\alpha,\gamma}T_0^{-\frac{\vert \alpha\vert +\vert \gamma \vert -1}{4}}.
\]
We do this by induction over $\vert \alpha \vert +\vert \gamma\vert$. Clearly it is true for $\vert \alpha \vert +\vert \gamma\vert=1$ and assuming that it is true for all $\vert \alpha\vert+\vert \gamma\vert< m$ we obtain, by \cref{eq:uhat}, for any $\vert \alpha \vert +\vert \gamma\vert=m$ such that $\alpha\neq \gamma$
\begin{align*}
\hat{u}_{\alpha,\gamma}&=\ell(k_{\alpha,\gamma})\sum_{\substack{\alpha'+\alpha''=\alpha\\
                                    \gamma'+\gamma''=\gamma}}C_{\alpha',\gamma'}C_{\alpha'',\gamma''}T_0^{-\frac{\vert \alpha'\vert+\vert \alpha''\vert +\vert \gamma' \vert+\vert \gamma''\vert -2}{4}}\\&=T_0^{-\frac{\vert \alpha\vert +\vert \gamma \vert -1}{4}}\frac{\sum_{\substack{\alpha'+\alpha''=\alpha\\
                                    \gamma'+\gamma''=\gamma}}C_{\alpha',\gamma'}C_{\alpha'',\gamma''}}{\left(-\sqrt{\sqrt{\frac{k_1}{k_2}}+\sqrt{\frac{k_2}{k_1}}}+\sqrt{\frac{\sqrt{k_1k_2}}{\vert k_{\alpha,\gamma}\vert}+\frac{\vert\kappa_{\alpha,\gamma}\vert}{\sqrt{k_1k_2}}}\right)},
\end{align*}
and if $\alpha=\gamma$ we obtain $C_{\alpha,\gamma}=0$. Similarly, with $T_0$ dependence for $\hat{u}_{\alpha,\gamma}$ proved, \cref{eq:nhat} gives us
\[
\hat{n}_{(0,k_1),(k_2-1,0)}=T^{-\frac{k_1+k_2-3}{4}}\sum_{\substack{\alpha'+\alpha''=(0,k_1)\\
                                    \gamma'+\gamma''=(k_2-1,0)}}C_{\alpha',\gamma'}C_{\alpha'',\gamma''}
\]
proving the result.
\end{proof}
With \cref{prop: c0AndKappa0AreDeterminedByK1K2AndT,prop:Tdependence} we can apply \cref{thm:necessaryandsufficient} and find that there are no arbitrarily small asymmetric solutions to \cref{eq:WhithamInfDepth}.
\begin{theorem} Let $C(k_1,k_2)$ be the constant from \cref{prop:Tdependence}.
   If $C(k_1,k_2)\neq 0$ for all integers $1\leq k_1<k_2$, then there exist no arbitrarily small asymmetric solutions to \cref{eq:WhithamInfDepth} in $\ac{\circ}{\mathcal{C}}^s$, $s>1$.    
\end{theorem}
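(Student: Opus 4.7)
The strategy is to apply part (a) of \Cref{thm:necessaryandsufficient} at every parameter value $\mu_0=(c_0,\kappa_0,T_0)\in\R_+^3$. Each application rules out a neighborhood of $(0,\mu_0)$ in $\ac{\circ}{\mathcal{C}}^s(\T)\times\R_+^3$ from housing any nontrivial asymmetric solution at small amplitude. Extracting a limit point of a hypothetical vanishing sequence $(u_n,\mu_n)$ of asymmetric solutions then produces the contradiction.

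First I would classify the possible kernels. On $(0,\infty)$ the symbol $m_{T_0,\infty}(\xi)=\sqrt{1/\xi+T_0\xi}$ has derivative
\[
m_{T_0,\infty}'(\xi)=\frac{T_0-1/\xi^2}{2m_{T_0,\infty}(\xi)},
\]
so $m_{T_0,\infty}$ is strictly decreasing on $(0,1/\sqrt{T_0})$ and strictly increasing afterwards. Consequently $m_{T_0,\infty}(\kappa_0 k)=c_0$ admits at most two positive integer solutions, and since $l_{\mu_0}$ is even and $\ac{\circ}{\mathcal{C}}^s(\T)$ consists of zero-mean functions, $|K_{\mu_0}|\in\{0,2,4\}$.

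The two cases $|K_{\mu_0}|\in\{0,2\}$ satisfy the hypothesis of \Cref{thm:necessaryandsufficient}(a) directly. In the remaining case $K_{\mu_0}=\{\pm k_1,\pm k_2\}$ with $1\leq k_1<k_2$, \Cref{prop: c0AndKappa0AreDeterminedByK1K2AndT} forces $(c_0,\kappa_0)$ to be the explicit pair determined by $T_0$ and $(k_1,k_2)$, which is exactly the regime of \Cref{prop:Tdependence}. That proposition then yields
\[
\hat{n}_{(0,k_1),(k_2-1,0)}(\mu_0)=C(k_1,k_2)\,T_0^{-(k_1+k_2-3)/4},
\]
which is nonzero by the standing assumption $C(k_1,k_2)\neq 0$ together with $T_0>0$. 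Hence \Cref{thm:necessaryandsufficient}(a) again rules out asymmetric solutions near $(0,\mu_0)$.

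The mild obstacle is the global-to-local passage. Given a vanishing sequence $(u_n,\mu_n)$ of asymmetric solutions, if $(\mu_n)$ stays in a compact subset of $\R_+^3$ then a subsequence converges to some $\mu_*$ and the case analysis above applied at $\mu_*$ gives the contradiction. Sequences with $\mu_n$ escaping toward $\partial\R_+^3$ or to infinity have to be handled separately, but in those regimes $|l_\mu(k)|$ is uniformly bounded below for all $k\in\Z\setminus\{0\}$, so \Cref{lemma:TrivialKernel} prevents nontrivial small solutions from arising in the first place.
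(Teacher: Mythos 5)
Your proof follows essentially the same route as the paper's: the single positive critical point of $m_{T_0,\infty}$ bounds the kernel dimension by four, and each case $\vert K_{\mu_0}\vert\in\{0,2,4\}$ is dispatched by part (a) of \Cref{thm:necessaryandsufficient}, with \Cref{prop: c0AndKappa0AreDeterminedByK1K2AndT} and \Cref{prop:Tdependence} supplying $\hat{n}_{(0,k_1),(k_2-1,0)}(\mu_0)=C(k_1,k_2)T_0^{-(k_1+k_2-3)/4}\neq 0$ in the four-dimensional case. The paper stops there, reading the statement locally in $\mu$ (per wavenumber pair); your added compactness discussion is extra, and its final claim that $\vert l_\mu(k)\vert$ is uniformly bounded below as $\mu$ escapes $\R_+^3$ is not justified --- four-dimensional kernels occur arbitrarily close to the boundary (e.g.\ as $T\to 0$ with $\kappa$ and $c$ chosen accordingly) --- but that step is not needed for the argument the paper actually makes, since the local case analysis already applies at each $\mu_n$ individually.
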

\begin{proof} Note that $m'_{T,\infty}(\xi)=0$ for exactly one $\xi\in \R_+$ so it is clear that $\dim \ker{(-c_0+M_{\kappa_0,T_0,\infty})}\leq 4$. Thus, for any wavenumber pair $(k_1,k_2)$ such that the constant from \Cref{prop:Tdependence} is nonzero, we can apply \cref{thm:necessaryandsufficient} to show that there are no asymmetric solutions for that wavenumber pair. If this is true for all wavenumber pairs, then there are no asymmetric solutions.
\end{proof}
\begin{remark} We have calculated $C$ numerically for all $(k_1,k_2)$ such that $k_1<k_2\leq 100$ and found no examples where $C=0$ among these, and we also believe it is highly unlikely that $C=0$ for any of the remaining $(k_1,k_2)$. Moreover, the condition $C=0$ is not sufficient to show the existence of small-amplitude asymmetric waves, rather, even if $C$ happens to be zero for some wavenumber pair, it is still very unlikely that asymmetric solutions exist. 
\end{remark}

Another way to view this is that one independent parameter is fixed by taking the limit $d\to \infty$. In \cref{eq:WhithamVarDepth} we seemingly have four parameters $(c,\kappa,T,d)$, but because we can transform the equation into \cref{eq:WhithamDepth1} we only have three independent parameters. However, taking the limit fixes one of these parameters. By changing the variables
\[
u\mapsto\tilde{u}=\kappa^{1/2} u,\qquad c\mapsto\tilde{c}=\kappa^{1/2} c,\qquad \text{and}\qquad T\mapsto\tilde{T}=\kappa^2 T,
\]
we turn \cref{eq:WhithamInfDepth} into
\[
\frac{1}{\kappa}(-\tilde{c}\tilde{u}+M_{1,\tilde{T},\infty}\tilde{u}+\ac{\circ}{P}u^2)=0,
\]
where it is obvious that $\tilde{c}$ and $\tilde{T}$ are the only free parameters left. Using this equation instead, we would have to fix $\tilde{c}_0$ and $\tilde{T}_0$ to solve
\[
m_{\tilde{T}_0,\infty}(k_1)=\tilde{c}_0=m_{\tilde{T}_0,\infty}(k_2)
\]
and thus $\hat{n}_{(0,k_1),(k_2-1,0)}$ becomes a constant which plays a role equivalent to $C$ from \cref{prop:Tdependence}.
\subsection*{The Babenko equation} 
The full water wave problem can be expressed as a single equation known as the Babenko equation \cite{K.I.Babenko1987}. Since its inception as an equation only for gravity waves it has evolved and been derived in other regimes. Here we will consider the finite depth version with surface tension \cite{Buffoni_2004,Clamond_2015} and the infinite depth version with surface tension \cite{Buffoni_2000}. We show that they share a relation similar to the infinite and finite depth Whitham equations considered above and in \cite{Maehlen_2023}, respectively. This will allow us to conclude that there are no arbitrarily small asymmetric solutions to the water wave problem on infinite depth, and give hope to the possibility of finding arbitrarily small asymmetric solutions to the finite depth water wave problem.

We begin by introducing the finite depth Babenko equation with surface tension \cite{Buffoni_2004,Clamond_2015}, normalized for zero mean and rescaled to the Zygmund spaces on the unit circle in the same way as the Whitham equation
\begin{equation}\label{eq:BabFin}
\begin{aligned}
0&=-c^2\mathcal{H}_{\kappa,d}\tilde{u}+g\{\tilde{u}+\ac{\circ}{P}\tilde{u}\mathcal{H}_{\kappa,d}\tilde{u}+\ac{\circ}{P}\mathcal{H}_{\kappa,d}(\tilde{u}^2/2)\}\\
&\qquad-\ac{\circ}{P}T\mathcal{D}_{\kappa}\left\{\frac{\mathcal{D}_{\kappa}\tilde{u}}{\sqrt{(\mathcal{D}_{\kappa}\tilde{u})^2+(1+\mathcal{H}_{\kappa,d}\tilde{u})^2}}\right\}+ \ac{\circ}{P}T \mathcal{H}_{\kappa,d}\left\{\frac{1+\mathcal{H}_{\kappa,d}\tilde{u}}{\sqrt{(\mathcal{D}_{\kappa}\tilde{u})^2+(1+\mathcal{H}_{\kappa,d}\tilde{u})^2}}\right\},
\end{aligned}
\end{equation}
where, $c$ is the wave speed, $g$ is the gravitational acceleration, $T$ is the surface tension strength.
Moreover, the operators in the equation are given by
\begin{align*}
\mathcal{H}_{\kappa,d}(e^{ikx})&=\kappa k \coth(\kappa k d)e^{ikx},\\
\mathcal{D}_{\kappa}(e^{ikx})&=i\kappa k e^{ikx},
\end{align*}
where $d$ is the depth. We take the limit $d\to \infty$ and introduce the new variables
\[
\tilde{u}\mapsto u=\kappa \tilde{u}, \qquad c\mapsto \nu=\frac{c \kappa^{1/2}}{g^{1/2}}, \qquad\text{and}\qquad T\mapsto \beta=\frac{T\kappa^2}{g}.
\]
This gives us the equation
\begin{equation}\label{eq:BabInf}
\begin{aligned}
0&=\frac{g}{\kappa}\Bigg(-\nu^2\mathcal{H}_{1,\infty}u+u+\ac{\circ}{P}u\mathcal{H}_{1,\infty}u+\ac{\circ}{P}\mathcal{H}_{1,\infty}(u^2/2)\\
&\qquad-\ac{\circ}{P}\beta \mathcal{D}_{1}\left\{\frac{\mathcal{D}_{1}u}{\sqrt{(\mathcal{D}_{1}u)^2+(1+\mathcal{H}_{1,\infty}u)^2}}\right\}+ \ac{\circ}{P}\beta \mathcal{H}_{1,\infty}\left\{\frac{1+\mathcal{H}_{1,\infty}u}{\sqrt{(\mathcal{D}_{1}u)^2+(1+\mathcal{H}_{1,\infty}u)^2}}\right\}\Bigg),
\end{aligned}
\end{equation}
where
\[
\mathcal{H}_{1,\infty}(e^{ikx})= \vert k\vert e^{ikx}.
\]
\Cref{eq:BabInf} is of the form of \cref{eq:fundamentalequation}
with
\begin{align}
\mu&=(\nu,\beta)\in\R_+^2,\label{eq:InfWW1}\\
L(\mu)u&=-\nu^2\mathcal{H}_{1,\infty}u+u-\beta \mathcal{D}_{1}^2 u,\label{eq:InfWW2}\\
\nonumber N(\mu,u)&=\ac{\circ}{P}\Bigg(u\mathcal{H}_{1,\infty}u+\mathcal{H}_{1,\infty}(u^2/2)+\beta\mathcal{D}_1^2 u\\
&\qquad-\beta \mathcal{D}_{1}\left\{\frac{\mathcal{D}_{1}u}{\sqrt{(\mathcal{D}_{1}u)^2+(1+\mathcal{H}_{1,\infty}u)^2}}\right\}+\beta \mathcal{H}_{1,\infty}\left\{\frac{1+\mathcal{H}_{1,\infty}u}{\sqrt{(\mathcal{D}_{1}u)^2+(1+\mathcal{H}_{1,\infty}u)^2}}\right\}\Bigg)\label{eq:InfWW3}
\end{align}
We begin by showing that this equation belongs to the class of equations treated in this paper.
\begin{lemma}The \crefrange{property1}{property4} are satisfied for \cref{eq:BabInf} where the parameters, linear part and nonlinear part are given by \crefrange{eq:InfWW1}{eq:InfWW3} with $X^{s}(\T)=\ac{\circ}{\mathcal{C}}^s(\T)$, $X^{t}(\T)=\ac{\circ}{\mathcal{C}}^{s-2}(\T)$, $s>5/2$.
\end{lemma}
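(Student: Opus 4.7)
The verification follows the same template as the corresponding lemma for the infinite depth Whitham equation: check \cref{property1,property2,property3,property4} in turn, using essentially the symbolic calculus and multilinear expansions that are already implicit in \crefrange{eq:InfWW1}{eq:InfWW3}. The only substantive difference from the Whitham case is the presence of the square-root nonlinearities, which are handled by Taylor-expanding the real-analytic functions $\Phi_1(a,b)=a/\sqrt{a^2+(1+b)^2}$ and $\Phi_2(a,b)=(1+b)/\sqrt{a^2+(1+b)^2}$ around $(a,b)=(0,0)$ and feeding the resulting expansions through the operators $\mathcal{D}_1$ and $\mathcal{H}_{1,\infty}$.

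\textbf{Properties 1 and 2.} The symbol of $L(\mu)$ is $l_\mu(k)=-\nu^2|k|+1+\beta k^2$, which is obviously even. The operator $u\mapsto -\beta\mathcal{D}_1^2 u$ is an isomorphism of $\ac{\circ}{\mathcal{C}}^s(\T)$ onto $\ac{\circ}{\mathcal{C}}^{s-2}(\T)$ (its symbol is bounded above and below by a constant multiple of $k^2$ on nonzero modes), so writing $L(\mu)=-\beta\mathcal{D}_1^2(I-\beta^{-1}\mathcal{D}_1^{-2}(\nu^2\mathcal{H}_{1,\infty}-I))$ exhibits $L(\mu)$ as an isomorphism composed with the identity plus a Fourier multiplier of order $-1$, which is compact on $\ac{\circ}{\mathcal{C}}^s(\T)$ by the compact embedding $\ac{\circ}{\mathcal{C}}^{s+1}(\T)\hookrightarrow\ac{\circ}{\mathcal{C}}^s(\T)$. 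Hence $L(\mu)$ is Fredholm of index $0$. For \cref{property2}, the polynomial pieces $u\mathcal{H}_{1,\infty}u$ and $\mathcal{H}_{1,\infty}(u^2/2)$ are already bilinear with the required symbol symmetry, while the square-root pieces expand as
\[
\Phi_1(\mathcal{D}_1 u,\mathcal{H}_{1,\infty}u)=\mathcal{D}_1 u+\sum_{m\geq 3}P_m(\mathcal{D}_1 u,\mathcal{H}_{1,\infty}u),\qquad
\Phi_2(\mathcal{D}_1 u,\mathcal{H}_{1,\infty}u)=1+\sum_{m\geq 2}Q_m(\mathcal{D}_1 u,\mathcal{H}_{1,\infty}u),
\]
with $P_m$, $Q_m$ homogeneous polynomials of degree $m$. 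Because $\mathcal{H}_{1,\infty}(1)=0$, the added $+\beta\mathcal{D}_1^2 u$ term in \cref{eq:InfWW3} exactly cancels the linear contribution $-\beta\mathcal{D}_1(\mathcal{D}_1 u)$, so $N(\mu,u)$ is a convergent sum of $m$-linear forms starting at $m=2$, each of which is a composition of Fourier multipliers with even symbols and pointwise products — hence its kernel $n_{m,\mu}(k_1,\ldots,k_m)$ is invariant under simultaneous sign flip of its arguments.

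\textbf{Properties 3 and 4.} For \cref{property3}, the candidate functional is
\[
\mathcal{J}_\mu(u)=\int_\T\!\left[-\tfrac{\nu^2}{2}u\mathcal{H}_{1,\infty}u+\tfrac{1}{2}u^2+\tfrac{1}{2}u^2\mathcal{H}_{1,\infty}u+\beta\left(\sqrt{(\mathcal{D}_1 u)^2+(1+\mathcal{H}_{1,\infty}u)^2}-1\right)\right]dx.
\]
Using self-adjointness of $\mathcal{H}_{1,\infty}$ and antisymmetry $\mathcal{D}_1^*=-\mathcal{D}_1$, a direct computation of $D_u\mathcal{J}_\mu(u)w$ produces precisely the $\ac{\circ}{P}$-projection of the right-hand side of \cref{eq:BabInf} paired against $w$ (note that $\ac{\circ}{P}$ can be dropped from the left factor of the pairing since $w\in\ac{\circ}{\mathcal{C}}^s$). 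For \cref{property4}, the argument uses that $\ac{\circ}{\mathcal{C}}^r(\T)$ is a Banach algebra for $r>1/2$ and that composition with a real-analytic function of several variables is analytic on such spaces. With $s>5/2$, both $\mathcal{D}_1 u$ and $\mathcal{H}_{1,\infty}u$ lie in $\ac{\circ}{\mathcal{C}}^{s-1}$ with $s-1>3/2$, so the maps $u\mapsto\Phi_j(\mathcal{D}_1 u,\mathcal{H}_{1,\infty}u)$ are analytic into $\mathcal{C}^{s-1}(\T)$; applying $\mathcal{D}_1$ or $\mathcal{H}_{1,\infty}$ and projecting with $\ac{\circ}{P}$ gives analytic maps into $\ac{\circ}{\mathcal{C}}^{s-2}(\T)$. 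Dependence on $(\nu,\beta)\in\R_+^2$ is polynomial, so the combined map $F:M\times\ac{\circ}{\mathcal{C}}^s(\T)\to\ac{\circ}{\mathcal{C}}^{s-2}(\T)$ is analytic in the sense of \cite{Buffoni_2004}. The main technical obstacle is the analyticity statement, since it requires the Nemytskii estimates for $\Phi_1,\Phi_2$ on Zygmund spaces together with uniform positivity of the denominator $\sqrt{(\mathcal{D}_1 u)^2+(1+\mathcal{H}_{1,\infty}u)^2}$; the latter holds in a sufficiently small $\ac{\circ}{\mathcal{C}}^s$-ball around $0$ because $s-1>1/2$ gives a uniform $L^\infty$ bound on $\mathcal{H}_{1,\infty}u$, and the radius of convergence of the Taylor series of $\Phi_j$ then provides the required uniform estimates term by term.
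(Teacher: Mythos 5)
Your proposal follows essentially the same route as the paper: the same Fredholm factorization of $L(\mu)$ through an invertible second-order multiplier plus a compact perturbation, the same power-series expansion of the square-root nonlinearity to extract the $m$-linear pieces, the same variational functional (up to an irrelevant additive constant $-\beta$ in the integrand), and the same analyticity argument via the convergent expansion and polynomial parameter dependence.

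One slip: your expansion $\Phi_1(a,b)=a+\sum_{m\geq 3}P_m(a,b)$ is wrong, since $\Phi_1(a,b)=a\bigl(a^2+(1+b)^2\bigr)^{-1/2}=a-ab+\mathcal{O}(3)$ has the nonzero quadratic term $P_2=-ab$; this is precisely what produces the contribution $\beta\mathcal{D}_1(\mathcal{D}_1u_1\,\mathcal{H}_{1,\infty}u_2)$ in the paper's explicit formula for $N_2$. The omission does not affect the truth of \cref{property2} as verified here (the nonlinearity still starts at order $m=2$ with even symbols, owing to the other quadratic terms), but it would corrupt any downstream computation of $\hat{n}_{(0,k_1),(k_2-1,0)}$ that relies on the exact form of $N_2$, so the quadratic term should be restored.
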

\begin{proof} To show \cref{property1} is very similar to how it was done for the Whitham equation. Immediately we get that $L(\mu)$ is a Fourier multiplier with symbol
\[
l_\mu(k)=-\nu^2 \vert k\vert +1+\beta k^2.
\]
Moreover, $I-\beta \mathcal{D}_1^2$ is invertible as an operator from $\ac{\circ}{\mathcal{C}}^s$ to $\ac{\circ}{\mathcal{C}}^{s-2}$ and $\mathcal{H}_{1,\infty}$ maps $\ac{\circ}{\mathcal{C}}^s$ to $\ac{\circ}{\mathcal{C}}^{s-1}$. This means the operator $-\nu^2(I-\beta \mathcal{D}_1^2)^{-1}\mathcal{H}_{1,\infty}$ is compact on $\ac{\circ}{\mathcal{C}}^s$, whence $I-\nu^2(I-\beta \mathcal{D}_1^2)^{-1}\mathcal{H}_{1,\infty}$ and thus so is 
\[
L(\mu)=(I-\beta \mathcal{D}_1^2)(I-\nu^2(I-\beta \mathcal{D}_1^2)^{-1}\mathcal{H}_{1,\infty}).
\]

Verification of \cref{property2} is certainly a bit more involved in this case than in the Whitham case. However, if $u\in B^{\ac{\circ}{\mathcal{C}}^s}_\varepsilon(0)$ we have
	\[
	\frac{1}{\sqrt{1+(2\mathcal{H}_{1,\infty} u+(\mathcal{H}_{1,\infty} u)^2+(\mathcal{D}_{1} u)^2)}}=1+\sum_{m=1}^{\infty}a_m (2\mathcal{H}_{1,\infty} u+(\mathcal{H}_{1,\infty} u)^2+(\mathcal{D}_{1} u)^2)^m
	\]
	for some $a_m\in \R$. Expanding the powers and using $a_1=-\frac{1}{2}$ we can rearrange this into
	\begin{equation}\label{eq:babenkoExpansion}
		\frac{1}{\sqrt{1+(2\mathcal{H}_{1,\infty} u+(\mathcal{H}_{1,\infty} u)^2+(\mathcal{D}_{1} u)^2)}}=1-\mathcal{H}_{1,\infty} u+\sum_{j,k:j+2k\geq2}b_{j,k} (\mathcal{H}_{1,\infty} u)^j(\mathcal{D}_{1} u)^{2k},
	\end{equation}
	for some $b_{j,k}\in \R$, which allows us to find
	\begin{align*}
		N_2(\mu,(u_1,u_2))&= u_1\mathcal{H}_{1,\infty} u_2+ \mathcal{H}_{1,\infty}(u_1u_2/2)+\beta \mathcal{D}_{1} (\mathcal{D}_{1} u_1\mathcal{H}_{1,\infty} u_2)-\beta \mathcal{H}_{1,\infty}(\mathcal{D}_{1} u_1\mathcal{D}_{1} u_2/2)\\
	\intertext{ and }
		N_m(\mu,(u_1,\ldots,u_n))&=\sum_{j,k:j+2k+1=m}\beta \left[(b_{j+1,k}+b_{j,k})\mathcal{H}_{1,\infty}(\mathcal{H}_{1,\infty} u_1\ldots \mathcal{H}_{1,\infty} u_{j+1}\mathcal{D}_{1} u_{j+2}\ldots \mathcal{D}_{1} u_m)\right.\\
		&\qquad\qquad\qquad\qquad\left.-b_{j,k}\mathcal{D}_{1} (\mathcal{H}_{1,\infty} u_1\ldots \mathcal{H}_{1,\infty} u_{j}\mathcal{D}_{1} u_{j+1}\ldots \mathcal{D}_{1} u_m)\right]
	\end{align*}
	for $m\geq 3$. This means that
 \begin{align*}
		N_2(\mu,(e^{ik_1x},e^{ik_2x}))&= (\vert k_2\vert+\vert k_1+k_2\vert/2-(k_1+k_2)k_1\vert k_2\vert+\beta\vert k_1+k_2\vert k_1k_2/2)e^{i(k_1+k_2)x}\\
	\intertext{ and }
		N_m(\mu,(e^{ik_1x},\ldots,e^{ik_nx}))&=\sum_{j,k:j+2k+1=m}\beta (-1)^k\left[(b_{j+1,k}+b_{j,k})\left\vert \sum_{l=1}^m k_l \right\vert\left(\prod_{l=1}^{j+1}\vert k_l\vert\right)\left( \prod_{l=j+2}^{m} k_{l}\right)\right.\\
		&\qquad\qquad\qquad\qquad\left.+b_{j,k}\left(\sum_{l=1}^m k_l \right)\left(\prod_{l=1}^{j}\vert k_l\vert \right)\left(\prod_{l=j+1}^{m} k_{l}\right)\right]e^{i(\sum_{l=1}^m k_l)x},
	\end{align*}
for $n\geq 3$. These expressions define $n_{m,\mu}(k_1,\ldots,k_n)$ (unless $\sum_{l=1}^m k_l=0$, then $n_{m,\mu}(k_1,\ldots,k_n)=0$) with the desired symmetry properties.

For \cref{property3} we have the functional \cite{Buffoni_2000} on $\ac{\circ}{\mathcal{C}}^s(\T)$
\[
\mathcal{J}_\mu(u)=\int_{\T}\frac{1}{2}\left[u^2(1+\mathcal{H}_{1,\infty}u)-\nu^2 u\mathcal{H}_{1,\infty}u\right]+\beta\sqrt{(\mathcal{D}_1u)^2 +(1+\mathcal{H}_{1,\infty}u)^2} \, dx.
\]

For \cref{property4}, we can use the expansion in \cref{eq:babenkoExpansion} to see that we have a series expansion for sufficiently small $u$. Then the joint analyticity in $u$ and the parameters follows from the fact that the equation is simply linear in $\beta$ and quadratic in $\nu$.
\end{proof}
The linear part of this equation is qualitatively the same as for the Whitham equation, but for completeness we include the following result. 
\begin{proposition}\label{prop:nu0andbeta0fork1k2}
    For any integers $1\leq k_1<k_2$ there exists a unique pair $(\nu_0,\beta_0)\in \mathbb{R}_+^2$ such that
    \begin{align*}
        l_{\mu_0}(k_1)=l_{\mu_0}(k_2)=0.
    \end{align*}
We further have $l_{\mu_0}(k)\neq 0$ for all $k\notin\{\pm k_1,\pm k_2\}$ and $\nu_0=\nu_0(k_1,k_2)$ and $\beta_0=\beta_0(k_1,k_2)$ are explicitly given by
    \begin{align*}
        \nu_0 = \sqrt{\frac{1}{k_2}+\frac{1}{k_1}} \quad \text{ and }\quad \beta_0 =\frac{1}{k_1k_2}.
    \end{align*}
\end{proposition}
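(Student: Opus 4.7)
The plan is to directly exploit the explicit form of the symbol $l_\mu(k) = -\nu^2|k| + 1 + \beta k^2$ that was computed in the verification of \cref{property1} for the infinite depth Babenko equation. Since $l_\mu$ is even and $l_\mu(0) = 1 \neq 0$, it suffices to find the positive integer zeros, and these are solutions of the quadratic polynomial $P(x) = \beta x^2 - \nu^2 x + 1$ evaluated at $x = k_1, k_2$.

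First, I set up the linear system in the unknowns $(\nu^2, \beta)$ obtained from imposing $P(k_1) = P(k_2) = 0$, namely
\begin{align*}
\beta k_1^2 - \nu^2 k_1 + 1 &= 0, \\
\beta k_2^2 - \nu^2 k_2 + 1 &= 0.
\end{align*}
Subtracting and dividing by $k_2 - k_1 \neq 0$ gives $\nu^2 = \beta(k_1 + k_2)$. Substituting this relation into the first equation and simplifying produces $\beta k_1 k_2 = 1$, so that $\beta_0 = 1/(k_1 k_2)$ and consequently $\nu_0^2 = (k_1+k_2)/(k_1 k_2) = 1/k_1 + 1/k_2$. Both values are strictly positive, hence $(\nu_0, \beta_0) \in \R_+^2$ is uniquely determined.

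For the assertion $l_{\mu_0}(k) \neq 0$ for $k \notin \{\pm k_1, \pm k_2\}$, I would invoke the fact that $P$ is a polynomial of degree exactly two (since $\beta_0 > 0$), so by the fundamental theorem of algebra it has at most two zeros in $\R$, and we have already identified both: $k_1$ and $k_2$. Therefore, for any nonzero integer $k$, $l_{\mu_0}(k) = P(|k|)$ vanishes if and only if $|k| \in \{k_1, k_2\}$, and $l_{\mu_0}(0) = 1$ is nonzero; combined with evenness this yields the claim.

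There is no genuine obstacle here: once the explicit symbol is in hand, the proof reduces to solving a $2 \times 2$ linear system and counting roots of a quadratic. The proposition is essentially the Babenko analog of \Cref{prop: c0AndKappa0AreDeterminedByK1K2AndT}, and the sign condition on $m'_{T,\infty}$ from that proposition has no counterpart needed here because positivity of $\beta_0$ already guarantees uniqueness.
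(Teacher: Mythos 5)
Your proof is correct and follows essentially the same route as the paper: solve the $2\times 2$ linear system for $(\nu^2,\beta)$ to get the explicit values, then rule out further zeros by noting that a quadratic with positive leading coefficient cannot have a third root. The only difference is presentational — you spell out the elimination and the $k=0$, evenness bookkeeping that the paper leaves implicit.
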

\begin{proof}
    The result comes from solving the system of equations
\begin{align*}
    -\nu^2k_1+1+\beta k_1^2=0,\\
    -\nu^2k_2+1+\beta k_2^2=0,
\end{align*}
except the part that $l_{\mu_0}(k)\neq 0$ for all $k\notin\{\pm k_1,\pm k_2\}$. However, if $l_{\mu_0}(k)=0$ for such a $k$ we would have a quadratic equation with three roots.
\end{proof}
This allows us to prove the following result about the infinite depth Babenko equation.
\begin{theorem}
   If the constants $\hat{n}_{(0,k_1),(k_2-1,0)}(\mu_0(k_1,k_2))\neq 0$ for all integers $1\leq k_1<k_2$, then there exists no arbitrarily small asymmetric solutions to \cref{eq:BabInf} in $\ac{\circ}{\mathcal{C}}^s(\T)$, $s>5/2$.
\end{theorem}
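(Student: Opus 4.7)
The plan is to apply the necessary-condition part (a) of \cref{thm:necessaryandsufficient} at every possible bifurcation point $\mu_0 = (\nu_0, \beta_0) \in \mathbb{R}_+^2$, mirroring the argument given just above for the infinite depth Whitham equation. First I would control the kernel size: since the symbol
\[
l_\mu(k) = -\nu^2|k| + 1 + \beta k^2
\]
restricts on positive integers to the quadratic $\beta k^2 - \nu^2 k + 1$, it has at most two positive integer roots. Combined with $l_\mu(0) = 1 \neq 0$ and the evenness of $l_\mu$, this forces the trichotomy $|K_{\mu_0}| \in \{0, 2, 4\}$ for every $\mu_0 \in M$.

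With the trichotomy in hand, what remains is a short case analysis, invoking \cref{thm:necessaryandsufficient}(a) in each branch. When $|K_{\mu_0}| = 0$ or $|K_{\mu_0}| = 2$, the first two clauses of \cref{thm:necessaryandsufficient}(a) immediately preclude small asymmetric solutions near $\mu_0$. When $|K_{\mu_0}| = 4$, I would write $K_{\mu_0} = \{\pm k_1, \pm k_2\}$ with $1 \leq k_1 < k_2$ and invoke \cref{prop:nu0andbeta0fork1k2} to identify $\mu_0$ with the explicit point $\mu_0(k_1,k_2) = (\sqrt{1/k_1 + 1/k_2},\, 1/(k_1 k_2))$; the standing hypothesis $\hat{n}_{(0,k_1),(k_2-1,0)}(\mu_0(k_1,k_2)) \neq 0$ then triggers the third clause of \cref{thm:necessaryandsufficient}(a) and rules out small asymmetric solutions in a neighborhood of $\mu_0$. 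Since every $\mu_0$ falls into exactly one of these three cases, no arbitrarily small asymmetric solution to \cref{eq:BabInf} can exist.

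The argument is almost entirely structural and presents no real obstacle: the substantive work has already been done in verifying \crefrange{property1}{property4} for \cref{eq:BabInf} and in producing the explicit bifurcation parameters of \cref{prop:nu0andbeta0fork1k2}, while the kernel bound $|K_{\mu_0}| \leq 4$ is immediate from the polynomial form of the symbol. In contrast with the Whitham case, where \cref{prop:Tdependence} extracts an explicit scaling that turns $\hat{n}_{(0,k_1),(k_2-1,0)}$ into a dimensionless constant $C(k_1,k_2)$, here the analogous nonvanishing is imposed as a hypothesis of the theorem; a concrete verification of it for all pairs $(k_1,k_2)$, presumably through explicit low-order computation combined with numerical evaluation at higher orders, would be the natural companion result and would play the role the Whitham constant $C(k_1,k_2)$ plays in the previous subsection.
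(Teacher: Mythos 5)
Your proposal is correct and follows essentially the same route as the paper: bound the kernel by $4$ using the quadratic form of the symbol (this is exactly what \cref{prop:nu0andbeta0fork1k2} provides), dispose of the $|K_{\mu_0}|\in\{0,2\}$ cases via the first two clauses of \cref{thm:necessaryandsufficient}(a), and invoke the third clause with the hypothesis $\hat{n}_{(0,k_1),(k_2-1,0)}(\mu_0(k_1,k_2))\neq 0$ in the four-dimensional case. The paper's proof is merely terser, leaving the $|K_{\mu_0}|\in\{0,2\}$ cases implicit.
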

\begin{proof} By \Cref{prop:nu0andbeta0fork1k2} we have $\dim \ker (L(\mu))\leq 4$. Thus, we only have to deal with the $\dim \ker (L(\mu))= 4$ case to show the nonexistence of solutions. For any $(k_1,k_2)$ we can apply \cref{thm:necessaryandsufficient} to find that there are no small-amplitude solutions to \cref{eq:BabInf} in $\ac{\circ}{\mathcal{C}}^s(\T)$ for that specific pair $(k_1,k_2)$ if $\hat{n}_{(0,k_1),(k_2-1,0)}(\mu_0(k_1,k_2))\neq 0$. If $\hat{n}_{(0,k_1),(k_2-1,0)}(\mu_0(k_1,k_2))\neq 0$ for all $(k_1,k_2)$, then there are no small-amplitude solutions at all to \cref{eq:BabInf} in $\ac{\circ}{\mathcal{C}}^s(\T)$.
\end{proof}
\begin{remark}
    Just as in the case for the Whitham equation we expect $\hat{n}_{(0,k_1),(k_2-1,0)}(\mu_0(k_1,k_2))\neq 0$ to hold for all $(k_1,k_2)$. Moreover, $\hat{n}_{(0,k_1),(k_2-1,0)}(\mu_0(k_1,k_2))= 0$ for some $(k_1,k_2)$ does not imply the existence of arbitrarily small asymmetric solutions.
\end{remark}
By \cite{Buffoni_2000} we know that \cref{eq:BabInf} is equivalent to the infinite depth water wave problem, so we obtain the result that the infinite depth water wave problem does not have arbitrarily small asymmetric solutions. This result in itself is not new; see \cite[Theorem 4.5]{Okamoto_2001}. However, in the context it is presented here we see the parallel with the finite and infinite depth Whitham equation. That we reduce the number of available parameters by taking the limit $d\to \infty$. Thus, in the finite depth case, \cref{eq:BabFin}, there is effectively one more parameter in the formulation of the water wave problem than in the model considered in \cite{Okamoto_2001}. This could allow for a similar existence result to the one in \cite{Maehlen_2023} for the water wave problem.
\section*{Acknowledgemets} The author is supported by the Research Council of Norway under Grant Agreement No. 325114. The author would also like to thanks Mats Ehrnström for insightful discussions.
\printbibliography
\end{document}